\newcommand{\CC}{\mathbb{C}}
\newcommand{\R}{\mathbb{R}}
\newcommand{\E}{\mathbb{E}}
\renewcommand{\P}{\mathbb P}
\renewcommand{\H}{\mathbb H}
\newcommand{\al}{\alpha}
\newcommand{\be}{\beta}
\newcommand{\la}{\lambda}
\newcommand{\ga}{\gamma}
\newcommand{\ep}{\varepsilon}
\newcommand{\de}{\delta}
\newcommand{\te}{\theta}
\newcommand{\ke}{\mathcal K}
\newcommand{\ia}{\mathcal I}
\newcommand{\n}{\mathcal N}
\newcommand{\ca}{\mathcal C}
\newcommand{\f}{\mathcal F}
\newcommand{\lll}{\mathcal L}
\newcommand{\proba}{(\Omega ,\mathcal{F},(\f_t)_{t\geq0},\P)}
\newcommand{\schw}{\stackrel{\raisebox{-1pt}{\textup{\tiny d}}}{\longrightarrow}}
\newcommand{\toas}{\stackrel{\mbox{\upshape\tiny a.s.}}{\longrightarrow}}
\newcommand{\wC}{\widehat{C}}
\newcommand{\wA}{\widehat{A}}
\DeclareMathOperator*{\argmin}{argmin}
\newcommand{\lan}{\langle}
\newcommand{\ran}{\rangle}
\newcommand{\bee}{\begin{equation}}
\newcommand{\eee}{\end{equation}}
\newcommand{\beea}{\begin{array}}
\newcommand{\eeea}{\end{array}}
\renewcommand{\theequation}{\arabic{section}.\arabic{equation}}
\theoremstyle{plain}
\newtheorem{prop}{Proposition}[section]
\newtheorem{cor}[prop]{Corollary}
\newtheorem{theo}[prop]{Theorem}
\newtheorem{lem}[prop]{Lemma}
\theoremstyle{definition}
\begin{document}

\title{On Dantzig and Lasso estimators of the drift in a high dimensional Ornstein-Uhlenbeck model \thanks{The authors
gratefully acknowledge financial support of ERC Consolidator Grant 815703
``STAMFORD: Statistical Methods for High Dimensional Diffusions''.}} 
\author{Gabriela Cio\l{}ek\thanks{Department
of Mathematics, University of Luxembourg, 
E-mail: gabriela.ciolek@uni.lu.}  \and
Dmytro Marushkevych \thanks{Department
of Mathematics, University of Luxembourg, 
E-mail: dmytro.marushkevych@uni.lu.} \and
Mark Podolskij\thanks{Department
of Mathematics, University of Luxembourg,
E-mail: mark.podolskij@uni.lu.}}

\maketitle

\begin{abstract}
\noindent In this paper we present new theoretical results for the Dantzig and Lasso estimators of the drift in 
a high dimensional Ornstein-Uhlenbeck model under sparsity constraints. 
Our focus is on oracle inequalities for both estimators and error bounds 
with respect to several norms. In the context of the Lasso estimator our paper is strongly related to \cite{GM19}, 
who investigated the same problem under row sparsity. We improve their rates and also prove the restricted eigenvalue
property solely under ergodicity assumption on the model. Finally, we demonstrate a numerical analysis to uncover the finite
sample performance of  the Dantzig and Lasso estimators.

\ \

{\it Key words}: \
Dantzig estimator, high dimensional statistics, Lasso, Ornstein-Uhlenbeck process, parametric estimation.
\bigskip

{\it AMS 2010 subject classifications.} 62M05, 60G15, 62H12, 62M99

\end{abstract}

\section{Introduction} \label{sec1}
\setcounter{equation}{0}
\renewcommand{\theequation}{\thesection.\arabic{equation}}

During past decades an immense progress has been achieved in statistics for stochastic processes. Nowadays, comprehensive studies on statistical inference for diffusion processes under low and high frequency observation schemes can be found in monographs \cite{JP12,KS97,K04}. Most of the existing literature is considering a fixed dimensional parameter space, while a high dimensional framework received much less attention in the diffusion setting. 

Since the pioneering work of McKean \cite{M66, M67}, high dimensional diffusions entered the scene in the context of modelling the movement of gas particles. More recently, they found  numerous  applications in economics and biology, among other disciplines
\cite{BCC11, CZ16,FTC09}. Typically, high dimensional diffusions are studied in the framework of  \textit{mean field theory}, which aims at 
bridging the interaction of particles at the microscopic scale and the mesoscopic features of the system (see e.g. \cite{S91} for a mathematical study).
In physics particles are often assumed to be statistically  equal,  but this homogeneity assumption is not appropriate in other applications. 
For instance, in \cite{CZ16} high dimensional SDEs  are used to model the wealth of trading agents in an economy, who are often far from being equal in their trading behaviour. Another example is the flocking phenomenon of individuals \cite{BCC11}, where it seems natural 
to assume that there are only very few ``leaders'' who have a distinguished role in the community. These examples motivate to investigate statistical inference for diffusion processes under sparsity constraints.

This paper is focusing on  statistical analysis of a $d$-dimensional Ornstein-Uhlenbeck model of the form 
\bee \label{OU}
dX_t= -A_0 X_t dt + dW_t, \qquad t \geq 0,
\eee
defined on a filtered probability space $\proba$, with underlying observation $(X_t)_{t \in [0,T]}$. Here $W$ denotes a standard $d$-dimensional Brownian motion and $A_0 \in \R^{d\times d}$ represents the unknown interaction matrix. Ornstein-Uhlenbeck processes are one of the most basic parametric diffusion models. When the dimension $d$ is fixed and $T\to \infty$, statistical estimation of the parameter $A_0$ has been discussed in several papers. Asymptotic analysis of the maximum likelihood estimator in the ergodic case can be found in e.g. \cite{KS97} while investigations of the non-ergodic setting can be found in \cite{KR01,KS99}. The adaptive Lasso estimation for multivariate diffusion models has been investigated in \cite{DI12}.  

Our main goal is to study the estimation of $A_0$ under sparsity constraints in the large $d$/large $T$ setting. Such a mathematical problem 
finds its main motivation in the analysis of bank connectedness whose wealth is modelled by the diffusion process $X$. This field of economics, which studies linkages between a large number of banks associated with e.g. asset/liability positions and contractual relationships, is key to understanding systemic risk in a global economy \cite{J08a}. Typically, the connectivity structure, which is represented by the parameter $A_0$, is quite sparse
since only few financial players are significant in an economy, and the main focus is on estimation of non-zero components of $A_0$.
 
Theoretical results in the high dimensional diffusion setting are rather scarce. In this context we would like to mention the Dantzig selector which was introduced in \cite{Can07} and primarily designed for linear regression models. More specifically, \cite{Can07} established sharp non-asymptotic bounds on the $l_2$-error in the estimated coefficients and proved that the error is within a factor of $log(d)$ of the error that would have been reached if the locations of the non-zero coefficients were known. Further extensions of the aforementioned results can be found in \cite{F19} and \cite{PI14}, which study the Dantzig selector for discretely observed linear diffusions and support recovery for the drift coefficient, respectively.  Our work is closely related to the recent article \cite{GM19}, where estimation of  $A_0$ under row sparsity has been investigated. The authors propose to use the classical Lasso approach and derive upper and lower bounds for the estimation error. We build upon their analysis and provide oracle inequalities and non-asymptotic theory for the the Lasso and Dantzig estimators. In comparison to  \cite{GM19}, we obtain an improved upper bound for the Lasso estimator, which essentially matches the theoretical lower bound, and also show that the \textit{restricted eigenvalue property} is automatically satisfied under ergodicity condition on the model \eqref{OU} (in \cite{GM19} the extra assumption (H4) has been imposed). The latter is proved via Malliavin calculus methods proposed in
\cite{NV09}.  Moreover, we show that the Lasso and Dantzig estimators are asymptotically efficient, which is a well known fact in linear regression models (cf. \cite{BRT09}). Finally, we present a simulation study to uncover the finite sample properties of both estimators.

The paper is organised as follows. Section \ref{sec2} is devoted to the exposition of the classical estimation theory in the fixed dimensional setting 
and to definition of the Lasso and Dantzig estimators. Concentration inequalities for various stochastic terms are derived in Section \ref{sec3}. In particular, we show the restricted eigenvalue property under the ergodicity assumption via Malliavin calculus methods. In Section \ref{sec4} we present oracle inequalities and error bounds for both estimators. Numerical simulation results are demonstrated in Section \ref{sec5}. Finally, some proofs are collected in Section \ref{sec6}.

\section{The model, notation and main definitions} \label{sec2}
\setcounter{equation}{0}
\renewcommand{\theequation}{\thesection.\arabic{equation}}

\subsection{Notation} \label{sec2.1} 
In this subsection we briefly introduce the main notations used throughout the paper. For a vector or a matrix $x$ the transpose of $x$ is denoted by $x^{\top}$. For $p\geq 1$ and $A\in \R^{d_1 \times d_2}$, we define the $l_p$-norm as
\[
\|A\|_p:= \left(\sum_{1\leq i \leq d_1, 1\leq j \leq d_2} |A_{ij}|^p \right)^{1/p}. 
\] 
We denote by $\|A\|_{\infty}=\lim_{p\to \infty} \|A\|_p$ the maximum norm and set $\|A\|_{0}:=\sum_{1\leq i \leq d_1, 1\leq j \leq d_2} 1_{\{A_{ij} \not =0\}}$. We associate to the Frobenius norm $\|\cdot\|_2$ the scalar product
\[
\lan A_1, A_2 \ran_{\text{F}} := \text{tr} (A_1^{\top} A_2), \qquad    A_1,A_2\in \R^{d_1 \times d_2},
\]
where $\text{tr}$ denotes the trace.  For a symmetric matrix $A \in \R^{d \times d}$ we write $\la_{\max}(A)$, $\la_{\min}(A)$ for the largest and the smallest eigenvalue of $A$, respectively. 
We denote by $\|A\|_{\text{op}}:=\sqrt{\la_{\max}(A^{\top} A)}$ the operator norm of  $A \in \R^{d \times d}$. For any $J \subset\{1,\ldots, d_1\}
\times \{1,\ldots, d_2\}$ and $A\in \R^{d_1 \times d_2}$, the matrix $A_{|J}$ is defined via
\bee \label{restA}
(A_{|J})_{ij}:= A_{ij} 1_{(i,j) \in J}. 
\eee
For a quadratic matrix $A \in \R^{d \times d}$, $\text{diag}(A)$ stands for the diagonal matrix satisfying $\text{diag}(A)_{ii}=A_{ii}$. We also  introduce the notation 
\bee \label{Csc0}
\ca(s,c_0):= \left\{ A \in \R^{d \times d}\setminus\{ 0 \} : ~\|A\|_1 \leq (1+c_0) \|A_{| \ia_s(A)}\|_1 \right\},
\eee
where $c_0>0$ and $\ia_s(A)$ is a set of coordinates of $s$ largest elements of $A$. 
Furthermore, $\text{vec}$ denotes the vectorisation operator and $\otimes$ stands for the Kronecker product. 
For $z\in \CC$ we denote by $\mathfrak{Re}(z)$ (resp. $\mathfrak{Im}(z)$) the real (resp. imaginary) part of $z$. 
Finally, for stochastic processes $(X_t)_{t\in [0,T]}, (Y_t)_{t\in [0,T]} \in L^2([0,T],dt)$ we introduce the scalar product
\[
\lan X, Y \ran_{L^2} := \frac 1T \int_0^T X_t Y_t dt. 
\]

\subsection{The setting and fixed dimensional theory} \label{sec2.2} 
We consider a $d$-dimensional Ornstein-Uhlenbeck process introduced in \eqref{OU}. 
Throughout this paper the matrix $A_0$  is assumed to satisfy the 
following condition: \\ \\
(H) Matrix $A_0$ is diagonalisable with eigenvalues $\te_1,\ldots,\te_d \in \CC$, i.e.
\[
A_0 = P_0 \text{diag}(\te_1,\ldots,\te_d) P_0^{-1},
\]  
where the column vectors of $P_0$ are eigenvectors of $A_0$. Furthermore, the  eigenvalues $\te_1,\ldots,\te_d \in \CC$ have strictly positive real 
parts:
\bee \label{eigenmin}
\mathfrak{r}_{0}:=\min_{1\leq j\leq d}(\mathfrak{Re}(\te_i))>0.
\eee
It is well known that under condition (H) the stochastic differential equation \eqref{OU} exhibits a unique stationary solution, which can be written explicitly as
\bee
X_t = \int_{-\infty}^t \exp\left(-(t-s) A_0\right) dW_t.
\eee
In this case we have that 
\bee
X_t \sim \n(0, C_{\infty}) \qquad \text{with} \qquad C_{\infty}:= \int_0^{\infty} \exp(-sA_0) \exp(-sA_0^{\top}) ds.
\eee
We assume that  the complete path $(X_t)_{t\in[0,T]}$ is observed and we are interested in estimating the unknown parameter $A_0$. Let us briefly recall the classical maximum likelihood theory when $d$ is fixed and $T\to \infty$. When $\P_{A}^T$ denotes the law of the process \eqref{OU} with transition matrix $A$ restricted to $\f_{T}$, the log-likelihood function is explicitly computed via Girsanov's theorem as 
\bee \label{loglike}
\log(\P_{A}^T/ \P_{0}^T) = -\int_0^T(AX_t)^{\top}dX_t-\frac{1}{2}\int_0^T(AX_t)^{\top}(AX_t)dt.
\eee
Consequently, the maximum likelihood estimator $\wA_{\text{ML}}$ is given by
\bee
\wA_{\text{ML}}= -\Big( \int_0^T dX_t X_t^{\top}\Big) \Big( \int_0^T X_t X_t^{\top}dt\Big)^{-1}.
\eee
Under condition (H) the estimator $\wA_{\text{ML}}$ is asymptotically normal, i.e.
\bee \label{MLclt}
\sqrt{T} \left(\text{vec}(\wA_{\text{ML}}) - \text{vec}(A_0) \right) \schw \n_{d^2}\left(0, C_{\infty}^{-1} \otimes \text{id} \right)
\eee 
with $\text{id}$ denoting the $d$-dimensional identity matrix.  Indeed, we have the identity $\wA_{\text{ML}} - A_0 = -\ep_T \wC_T^{-1}$
with
\bee \label{epCT}
\ep_T := \frac 1T  \int_0^T dW_t X_t^{\top} \qquad \text{and} \qquad \wC_T:= \frac 1T \int_0^T X_t X_t^{\top}dt \toas C_{\infty},
\eee
and the result \eqref{MLclt} follows from the standard martingale central limit theorem. We refer to \cite[p. 120--124]{KS97} for a more detailed exposition. 

When assumption (H) is violated the asymptotic theory for the maximum likelihood estimator $\wA_{\text{ML}}$ is more complex. If some eigenvalues $\te_j$ satisfy  $\mathfrak{Re}(\te_i)<0$ exponential rates appear as it has been shown in  \cite{KS99}. A further application of Ornstein-Uhlenbeck processes to co-integration is discussed in \cite{KR01}, where the condition  $\mathfrak{Re}(\te_i)=0$ appears for some $i$'s.

\subsection{The Lasso and Dantzig estimators} \label{sec2.3} 

Now we turn our attention to large $d$/large $T$ setting. We consider the Ornstein-Uhlenbeck model \eqref{OU} satisfying the assumption (H) and assume that the unknown transition matrix $A_0$ satisfies the constraint
\bee \label{constraint} 
\|A_0\|_0 \leq s_0. 
\eee
We remark that due to condition \eqref{eigenmin} it must necessarily hold that $s_0\geq d$. A standard approach to estimate $A_0$ under the sparsity constraint \eqref{constraint} is the Lasso method, which has been investigated in \cite{GM19} in the framework of an Ornstein-Uhlenbeck model.  
The Lasso estimator is defined as
\bee \label{lasso}
\wA_{\text{L}}:= \argmin_{A\in \R^{d\times d}} \left( \lll_T(A) + \la \|A\|_1\right) \textit{\;\;with}\qquad \lll_T(A):=-\frac 1T \log(\P_{A}^T/ \P_{0}^T),
\eee
where $\la>0$ is a tuning parameter.  We remark that $\wA_{\text{L}}$ can be computed efficiently, since it is a solution of a convex optimisation problem.

Next, we are going to introduce the Dantzig estimator of the parameter $A_0$. According to \eqref{loglike} the quantity $\lll_T(A)$ can be written as 
\bee \label{LTformula}
\lll_T(A) =\text{tr} \left( (\ep_T-A_0 \wC_T )A^{\top} + \frac 12 A \wC_T A^{\top}  \right) \qquad \text{and} \qquad \nabla \lll_T(A) = \ep_T-A_0 \wC_T
+A \wC_T.
\eee
We recall that $B$ belongs to a subdifferential of a convex function $f:\R^{d \times d} \to \R$ at point $B_0$, $B\in \partial f(B_0)$, if 
$\lan B, A-B_0 \ran_{\text{F}} \leq f(A)-f(B_0)$ for all $A \in \R^{d \times d}$. In particular, $B\in \partial \|B_0\|_1$ satisfies the constraint  
$\|B\|_{\infty} \leq 1$.   A necessary and sufficient condition for the minimiser at \eqref{lasso} is the fact that $0$ belongs to the subdifferential of the
function $A \mapsto \lll_T(A) + \la \|A\|_1$. This implies that the Lasso estimator $\wA_{\text{L}}$ satisfies the constraint
\bee \label{Lassoconstr}
\|\wA_{\text{L}} \wC_T + \ep_T-A_0 \wC_T \|_{\infty} \leq \la.
\eee
Now, the Dantzig estimator $\wA_{\text{D}}$ of the parameter $A_0$ is defined as a matrix with the smallest $l_1$-norm that satisfies the inequality
\eqref{Lassoconstr}, i.e.
\bee \label{Dantzig} 
\wA_{\text{D}} := \argmin_{A\in \R^{d\times d}} \left\{\|A\|_1:~ \|A \wC_T + \ep_T-A_0 \wC_T \|_{\infty} \leq \la\right\}.
\eee    
By definition of the Dantzig estimator we have that $\|\wA_{\text{D}}\|_1 \leq \|\wA_{\text{L}}\|_1$. In particular, when the tuning parameters $\lambda$ for Lasso and Dantzig estimators are preset to be the same, then the Lasso estimate is always a feasible solution to the Dantizg selector minimization problem although it may not necessarily be the optimal solution. This implies, that when respective solutions are not identical, the Dantizg selector solution is sparser (in $l_1$- norm) than the Lasso solution (see \cite{Jam09}, Appendix A for details). From the computational point view,  the Dantzig estimator can be found numerically via linear programming for convex optimisation with constraints. 

The following basic inequality, which is a direct consequence of the fact that $ \lll_T(\wA_{\text{L}}) + \la \|\wA_{\text{L}}\|_1 \leq  \lll_T(A) + \la \|A\|_1$ 
for all $A \in \R^{d\times d}$, provides the necessary basis for the analysis of the error $\wA_{\text{L}}-A_0$.

\begin{lem} \label{lem1}
(\cite[Lemma 3]{GM19}) For any $A \in \R^{d\times d}$ and $\la>0$ it holds that 
\[
\|(\wA_{\text{L}}-A_0) X\|_{L^2}^2 - \|(A-A_0) X\|_{L^2}^2 \leq 2 \lan \ep_T, A- \wA_{\text{L}} \ran_{\text{F}} - 
\|(A-\wA_{\text{L}}) X\|_{L^2}^2 +2\la (\|A\|_1- \|\wA_{\text{L}}\|_1),
\]
where the quantity $\ep_T$ is defined in  \eqref{epCT}. 
\end{lem}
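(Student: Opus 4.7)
The plan is to reduce the lemma to a first-order optimality condition for the Lasso and then verify that condition from the basic inequality applied along an interpolating ray. First I would rewrite the left-hand side as a linear functional of $\wA_{\text{L}}-A$: writing $\wA_{\text{L}}-A_0 = (\wA_{\text{L}}-A)+(A-A_0)$ and expanding the Frobenius inner product against $\wC_T$ yields the polarisation identity
\[
\|(\wA_{\text{L}}-A_0)X\|_{L^2}^2 - \|(A-A_0)X\|_{L^2}^2 + \|(A-\wA_{\text{L}})X\|_{L^2}^2 = 2\,\text{tr}\bigl((\wA_{\text{L}}-A_0)\wC_T(\wA_{\text{L}}-A)^{\top}\bigr).
\]
Combined with the explicit gradient $\nabla\lll_T(\wA_{\text{L}}) = \ep_T + (\wA_{\text{L}}-A_0)\wC_T$ read off from \eqref{LTformula}, the claim then becomes equivalent to the single inequality
\[
\lan \nabla\lll_T(\wA_{\text{L}}),\,\wA_{\text{L}}-A\ran_{\text{F}} \leq \la\bigl(\|A\|_1 - \|\wA_{\text{L}}\|_1\bigr).
\]

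Next I would establish this first-order inequality from the basic minimisation inequality. For $t \in (0,1]$, set $A_t := (1-t)\wA_{\text{L}} + tA$ and apply the basic inequality at $A_t$, using convexity of the $\ell_1$-norm to bound $\|A_t\|_1 \leq (1-t)\|\wA_{\text{L}}\|_1 + t\|A\|_1$; this gives
\[
\lll_T(\wA_{\text{L}}) - \lll_T(A_t) \leq \la t\bigl(\|A\|_1 - \|\wA_{\text{L}}\|_1\bigr).
\]
Since $\lll_T$ is quadratic in its argument by \eqref{LTformula}, the exact Taylor expansion
\[
\lll_T(A_t) = \lll_T(\wA_{\text{L}}) + t\,\lan \nabla\lll_T(\wA_{\text{L}}),\,A-\wA_{\text{L}}\ran_{\text{F}} + \frac{t^2}{2}\|(A-\wA_{\text{L}})X\|_{L^2}^2
\]
holds. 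Dividing through by $t>0$ and sending $t \to 0^{+}$ yields the target first-order inequality.

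Finally I would assemble the two pieces to obtain the statement of the lemma. The main subtlety -- and the reason why the conclusion is strictly sharper than what follows from applying the basic inequality to $A$ itself -- is that interpolating between $\wA_{\text{L}}$ and $A$ before taking the limit extracts the linear (first-order) information without paying the quadratic cost, and this curvature gain is precisely what materialises as the extra term $-\|(A-\wA_{\text{L}})X\|_{L^2}^2$. No stochastic estimates are needed: the whole proof is a deterministic manipulation of the quadratic form of $\lll_T$ combined with convexity of the $\ell_1$ penalty, so the only point requiring care is the polarisation bookkeeping that converts $\text{tr}((\wA_{\text{L}}-A_0)\wC_T(\wA_{\text{L}}-A)^{\top})$ into three squared $L^2$-norms with the correct signs.
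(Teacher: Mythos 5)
Your proof is correct and complete. The paper does not actually reprove this lemma (it is quoted from \cite[Lemma 3]{GM19}) and motivates it only as a ``direct consequence'' of the basic inequality $\lll_T(\wA_{\text{L}})+\la\|\wA_{\text{L}}\|_1\leq\lll_T(A)+\la\|A\|_1$; as you correctly point out, applying that inequality at $A$ itself yields only the weaker bound without the term $-\|(A-\wA_{\text{L}})X\|_{L^2}^2$, and recovering this quadratic gain genuinely requires first-order information at the minimiser. Your route --- deriving $\lan\nabla\lll_T(\wA_{\text{L}}),\wA_{\text{L}}-A\ran_{\text{F}}\leq\la\big(\|A\|_1-\|\wA_{\text{L}}\|_1\big)$ by applying the basic inequality along the segment $A_t=(1-t)\wA_{\text{L}}+tA$ and letting $t\to0^{+}$, then converting the cross term $2\,\text{tr}\big((\wA_{\text{L}}-A_0)\wC_T(\wA_{\text{L}}-A)^{\top}\big)$ into the three squared $L^2$-norms by polarisation --- is a valid, self-contained argument, and both the exact quadratic Taylor expansion of $\lll_T$ and the sign bookkeeping check out. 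The standard proof (and the one implicit in the paper, which records the subdifferential optimality condition leading to \eqref{Lassoconstr}) obtains the same first-order inequality directly from $0\in\nabla\lll_T(\wA_{\text{L}})+\la\,\partial\|\wA_{\text{L}}\|_1$ together with the defining property of the subdifferential; the two derivations are equivalent, with your interpolation argument having the minor advantage of avoiding any explicit appeal to subgradients.
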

From Lemma \ref{lem1} it is obvious that we require a good control over martingale term $\lan \ep_T, V\ran_{\text{F}}$
for certain matrices $V\in \R^{d\times d}$ to get an upper bound on the prediction error $\|(\wA_{\text{L}}-A_0) X\|_{L^2}$. Another important ingredient is the \textit{restricted eigenvalue property}, which is a standard requirement in the analysis of Lasso estimators (see e.g. \cite{BRT09,BV}). In our setting the   restricted eigenvalue property amounts in showing that 
\[
\inf_{V \in \ca(s,c_0)} \frac{\|VX\|_{L^2}^2}{\|V\|^2_{2}} \text{ is bounded away from } 0 \text{ with high probability.}
\]
Interestingly, the latter is a consequence of the model assumption (H) and not an extra condition as in the framework of linear regression. This has been noticed in \cite{GM19}, but an additional condition (H4) was required which is in fact not needed as we will show in the next section.  

In order to establish the connection between the Dantzig and the Lasso estimators we will show the inequality 
\[
\left| \|(\wA_{\text{D}}-A_0) X\|_{L^2} - \|(\wA_{\text{L}}-A_0) X\|_{L^2}\right| \leq c  \|\wA_{\text{L}}\|_0 \la^2
\]
for a certain constant $c>0$, which holds with high probability. Once  the term $\|\wA_{\text{L}}\|_0$ is controlled, we deduce statements about the error term $\wA_{\text{D}}-A_0$ via the corresponding analysis of $\wA_{\text{L}}-A_0$.

\section{Concentration bounds for the stochastic terms} \label{sec3}
\setcounter{equation}{0}
\renewcommand{\theequation}{\thesection.\arabic{equation}}

In this section we derive various concentration inequalities, which play a central role in the analysis of the estimators $\wA_{\text{L}}$ and 
$\wA_{\text{D}}$.

\subsection{The restricted eigenvalue property} \label{sec3.1}
This subsection is devoted to the proof of the restricted eigenvalue property. The main result of this subsection relies heavily on some theoretical techniques presented in \cite{NV09}, where Malliavin calculus is applied in order to obtain tail bounds for certain functionals of Gaussian processes. In the following,  we introduce some basic notions of Malliavin calculus; we refer to the monograph \cite{N06} for a more detailed exposition.   

Let $\H$ be a real separable Hilbert space. We denote by $B =
\{B(h):~h \in \H\}$ an \textit{isonormal Gaussian process} over
$\H$. That is, $B$ is a centred  Gaussian family with covariance kernel given by
\bee\label{isometry}
\E\big[B(h_1)B(h_2)\big]=\lan h_1,h_2\ran_{\H}.
\eee
We shall use the notation $L^2(B)= L^2(\Omega,\sigma(B),\P)$. For every $q\geq 1$, we write
$\H^{\otimes q}$ to indicate the $q$th tensor product of $\H$;\; $\H^{\odot q}$ stands for the symmetric $q$th tensor. 
We denote by $I_q$ the isometry between $\H^{\odot q}$ and the $q$th Wiener chaos of
$X$. It is well-known (see e.g. \cite[Chapter 1]{N06}) that any random variable $F\in L^2(B)$ admits the \textit{chaotic expansion}
\bee \label{ChaosExpansion}
F=\sum_{q=0}^\infty I_q(f_q), \qquad I_0(f_0):=\E[F],
\eee
where the series converges in $L^2$ and the kernels $f_q\in\H^{\odot q}$ are uniquely
determined by $F$. The operator $L$, called the \textit{generator of the Ornstein-Uhlenbeck semigroup}, is defined as
\[
LF:= - \sum_{q=1}^\infty qI_q(f_q)
\]  
whenever the latter series converges in $L^2$. The pseudo inverse $L^{-1}$ of $L$ is defined by $L^{-1}F=- \sum_{q=1}^\infty q^{-1}I_q(f_q)$.

Next, let us denote by $\mathcal{S}$  the set of all smooth cylindrical random
variables of the form 
$
F = f\big(B(h_1), \ldots, B(h_n)\big),
$
where $n\geq 1$, $f : \R^n \rightarrow \R$ is a
$C^{\infty}$-function with compact support and $h_i\in\H$. The
Malliavin derivative $DF$ of $F$ is defined as
$$DF:= \sum_{i =1}^n \frac{\partial f}{\partial x_i}\big(B(h_1), \ldots, B(h_n)\big)
h_i.$$ 
The space $\mathbb{D}^{1,2}$ denotes the
closure of $\mathcal{S}$ with respect to norm 
$\| F\|_{1,2}^2 : = \E[F^2] +
\E[ \| D F\|_{\H}^2].
$
The Malliavin derivative $D$ verifies the following \textit{chain rule}: when
$\varphi:\R^n\rightarrow\R$ is in $C^1_b$ (the
set of continuously differentiable functions with bounded partial derivatives) and if $(F_i)_{i=1,\ldots,n}$ is a vector of
elements in $\mathbb{D}^{1,2}$, then
$\varphi(F_1,\ldots,F_n)\in\mathbb{D}^{1,2}$ and
$$
D\varphi(F_1,\ldots,F_n)=\sum_{i=1}^n
\frac{\partial\varphi}{\partial x_i} (F_1,\ldots, F_n)DF_i.
$$
The next theorem establishes left and right tail bounds for certain elements $Z \in \mathbb{D}^{1,2}$.

\begin{theo} \label{Mallbound}
(\cite[Theorem 4.1]{NV09})
Assume that $Z \in \mathbb{D}^{1,2}$ and define the function 
$$g_Z(z):=\E[\lan DZ,- DL^{-1} Z\ran_{\H}|~Z=z].$$ 
Suppose that the following conditions hold for some $\al \geq 0$ and $\be>0$:
\begin{itemize}
\item[(i)] $g_Z(Z)\leq \al Z + \be$ holds $\P$-almost surely,
\item[(ii)] The law of $Z$ has a Lebesgue density. 
\end{itemize}
Then, for any $z>0$, it holds that 
\[
\P(Z\geq z) \leq \exp\left( - \frac{z^2}{2\al z+2\be}\right) \qquad \text{and} \qquad 
\P(Z\leq -z) \leq \exp\left( - \frac{z^2}{2\be}\right).
\]
\end{theo}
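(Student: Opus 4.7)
The plan is to follow the Malliavin--Stein methodology. The engine is the Stein-type identity
$$\E[Z\varphi(Z)] = \E\bigl[\varphi'(Z)g_Z(Z)\bigr],$$
valid for every absolutely continuous $\varphi:\R\to\R$ with bounded derivative. This identity is produced from the general integration-by-parts formula $\E[FG] = \E[\lan DF, -DL^{-1}G\ran_{\H}]$ applied with $F = \varphi(Z)$ and $G = Z-\E[Z]$, combined with the chain rule $D\varphi(Z)=\varphi'(Z)DZ$ and conditioning on $Z$ in order to produce $g_Z$ from the conditional expectation.

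With assumption (ii) in hand, I would mollify $\varphi$ to an approximation of $\mathbf{1}_{[z,\infty)}$ and pass to the limit, deducing the pointwise representation
$$g_Z(z)\rho(z)=\int_z^\infty x\rho(x)\,dx\quad\text{for a.e. }z\in\R,$$
where $\rho$ denotes the Lebesgue density of $Z$. For $z\ge 0$ the right-hand side is at least $zF_Z(z)$, where $F_Z(z):=\P(Z\ge z)$, so assumption (i) converts this into the differential inequality
$$(\log F_Z)'(z)=-\frac{\rho(z)}{F_Z(z)}\le -\frac{z}{\alpha z+\beta}.$$
Integrating from $0$ to $z$, using $\log F_Z(0)\le 0$, and invoking the elementary lower bound $\int_0^z s/(\alpha s+\beta)\,ds\ge z^2/(2(\alpha z+\beta))$ (which follows from the monotonicity of $1/(\alpha s+\beta)$ in $s$) produces the announced right-tail inequality.

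For the left tail I would repeat the construction with a mollification of $\mathbf{1}_{(-\infty,-z]}$, which after the analogous manipulations yields $g_Z(-z)\rho(-z)\ge zF_Z^-(z)$ with $F_Z^-(z):=\P(Z\le -z)$. Since $\alpha\ge 0$, assumption (i) reduces on the negative half-line to $g_Z(-z)\le -\alpha z+\beta\le\beta$, so the same argument now gives $(\log F_Z^-)'(z)\le -z/\beta$ and hence $\P(Z\le -z)\le\exp(-z^2/(2\beta))$ after integration.

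The main obstacle will be the rigorous passage from the abstract Stein identity to the pointwise formula for $g_Z(z)\rho(z)$: one has to mollify the indicator carefully, use the $L^2$-integrability coming from $Z\in\mathbb{D}^{1,2}$ to control the terms involving $g_Z(Z)$, and invoke existence of the density from assumption (ii) to justify convergence of both sides. The fact that $g_Z(Z)\ge 0$ almost surely, a standard consequence of the chaos decomposition of $Z$, is also needed implicitly to make the differential inequalities meaningful (e.g. to divide by $g_Z(-z)$ in the left-tail step). Once these analytic points are settled, the remaining steps constitute a one-dimensional ODE computation.
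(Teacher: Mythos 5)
Your sketch correctly reconstructs the Nourdin--Viens argument (the Stein-type identity $\E[Z\varphi(Z)]=\E[\varphi'(Z)\,g_Z(Z)]$, the resulting density formula $g_Z(z)\rho(z)=\int_z^\infty x\rho(x)\,dx$, and the two differential inequalities for the tails), which is precisely the route the paper relies on: Theorem \ref{Mallbound} is quoted from \cite[Theorem 4.1]{NV09} without proof. The one caveat is that your Stein identity, and indeed the theorem itself, requires $\E[Z]=0$ (otherwise $Z=N+m$ with $N$ standard Gaussian and $m$ large gives $g_Z\equiv 1$ but violates the right-tail bound); this hypothesis is present in \cite{NV09} but omitted in the paper's transcription, and it is satisfied in the paper's application since $Z_T^v$ is centred.
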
 

Now, we apply Theorem \ref{Mallbound} to certain quadratic forms of the Ornstein-Uhlenbeck process $X$. The following result is crucial 
for proving the restricted eigenvalue property.

\begin{prop} \label{PropH4}
Suppose that assumption (H) is satisfied and let $\wC_T$ be defined as in \eqref{epCT}. Then it holds for all $x>0$:
\bee \label{ineqH}
\sup_{v\in \R^d:~\|v\|_2=1} \P\left( | v^{\top} ( \wC_T-C_\infty ) v|\geq x\right) \leq 2\exp{(-T H_0(x))},
\eee
where the function $H_0$ is defined as
\bee
H_0(x)=\frac{\mathfrak{r}_0}{8\mathfrak{p}_0\mathfrak{K}_\infty} \frac{x^2}{x+\mathfrak{K}_\infty}
\eee
with $\mathfrak{K}_\infty=\la_{\max}(C_\infty)$ and $\mathfrak{p}_0=\|P_0\|_{\text{\rm op}}\|P_0^{-1}\|_{\text{\rm op}}$, and the quantities $P_0$
and $\mathfrak{r}_0$ are introduced in assumption (H). 
\end{prop}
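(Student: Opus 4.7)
The strategy is to apply the Nourdin--Viens bound (Theorem~\ref{Mallbound}) to the random variable $Z := v^\top(\wC_T - C_\infty) v$. Using the stationary representation $X_t = \int_{-\infty}^t e^{-(t-s)A_0}\, dW_s$, I would view $B(h) := \int_{\R} h(s)^\top dW_s$ as an isonormal Gaussian process over $\H := L^2(\R;\R^d)$ and set $h_t(s) := e^{-(t-s)A_0^\top} v \cdot \mathbf{1}_{\{s \le t\}}$, so that $v^\top X_t = I_1(h_t)$. The Wiener product formula then gives $(v^\top X_t)^2 - v^\top C_\infty v = I_2(h_t \otimes h_t)$, hence
\[
Z \,=\, I_2(f), \qquad f \,:=\, \frac{1}{T}\int_0^T h_t \otimes h_t\, dt \,\in\, \H^{\odot 2}.
\]
Crucially, the Hilbert--Schmidt operator $A_f$ on $\H$ associated with $f$ is positive semi-definite since $\lan g, A_f g\ran_\H = \frac{1}{T}\int_0^T \lan h_t, g\ran_\H^2\, dt \ge 0$ for every $g$. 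Its eigenvalues $\lambda_k \ge 0$ therefore satisfy $\sum_k \lambda_k = \text{tr}(A_f) = \frac{1}{T}\int_0^T \|h_t\|_\H^2\, dt = v^\top C_\infty v \le \mathfrak{K}_\infty$, and since $f \ne 0$ the law of $Z$ is absolutely continuous by Shigekawa's theorem for second chaos.

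Using the spectral decomposition $f = \sum_k \lambda_k\, e_k \otimes e_k$, one has $Z = \sum_k \lambda_k(\xi_k^2 - 1)$ with $\xi_k := I_1(e_k)$ i.i.d.\ standard normal, and the identities $DI_2(f) = 2I_1(f(\cdot,\star))$ and $-L^{-1}I_2(f) = \tfrac 12 I_2(f)$ yield
\[
\lan DZ,\, -DL^{-1}Z\ran_\H \,=\, 2\sum_k \lambda_k^2\, \xi_k^2 \,\le\, 2\lambda_* \sum_k \lambda_k\, \xi_k^2 \,=\, 2\lambda_* \bigl(Z + \text{tr}(A_f)\bigr),
\]
where $\lambda_* := \|A_f\|_{\text{op}}$ and the first inequality crucially uses $\lambda_k \ge 0$. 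Hence the hypothesis of Theorem~\ref{Mallbound} is verified $\P$-almost surely with $\alpha = 2\lambda_*$ and $\beta = 2\lambda_* \mathfrak{K}_\infty$.

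It remains to control $\lambda_*$. For $\|g\|_\H = 1$, $\lan g, A_f g\ran_\H = \frac{1}{T}\int_0^T \phi(t)^2\, dt$ with $\phi(t) := \int_0^\infty v^\top e^{-uA_0} g(t-u)\, du$, and the pointwise estimate $\|e^{-uA_0}\|_{\text{op}} \le \mathfrak{p}_0\, e^{-u\mathfrak{r}_0}$ afforded by assumption~(H) combined with the Cauchy--Schwarz and Young convolution inequalities gives an estimate of the form $\lambda_* \le c\, \mathfrak{p}_0 \mathfrak{K}_\infty/(\mathfrak{r}_0 T)$ once the Lyapunov estimate $\mathfrak{K}_\infty \le \mathfrak{p}_0^2/(2\mathfrak{r}_0)$ that follows from $C_\infty = \int_0^\infty e^{-sA_0} e^{-sA_0^\top}\, ds$ is invoked. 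Plugging $(\alpha,\beta) = (2\lambda_*, 2\lambda_* \mathfrak{K}_\infty)$ into Theorem~\ref{Mallbound} and taking a union bound over the two tails produces \eqref{ineqH} with the stated rate $H_0$. The main obstacle is this last calibration step: tuning the convolution argument so that the factor $\mathfrak{p}_0 \mathfrak{K}_\infty$ appears in the denominator of $H_0$ (rather than, say, $\mathfrak{p}_0^2$) requires balancing the Young estimate on $\lambda_*$ against the Lyapunov bound on $\mathfrak{K}_\infty$ carefully.
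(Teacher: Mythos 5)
Your overall architecture coincides with the paper's: both proofs apply Theorem \ref{Mallbound} to $Z=v^{\top}(\wC_T-C_\infty)v$, exploit that $Z$ lives in the second Wiener chaos (so $-L^{-1}Z=Z/2$ and $Z$ has a density), and verify hypothesis (i) with $\alpha,\beta$ of order $\mathfrak{p}_0\mathfrak{K}_\infty/(\mathfrak{r}_0T)$. Where you differ is in how you verify (i): you diagonalise the second-chaos kernel and reduce everything to $2\lambda_*(Z+\mathrm{tr}(A_f))$ with $\lambda_*=\|A_f\|_{\text{op}}$, whereas the paper bounds $\tfrac12\|DZ\|_\H^2$ directly by $\tfrac{2}{T^2}\int_0^T\int_0^T|Y_t^vY_s^v|\,|\rho_v(t-s)|\,dt\,ds$ with $\rho_v(r)=v^{\top}e^{-rA_0}C_\infty v$, then uses $|Y_tY_s|\le\tfrac12(Y_t^2+Y_s^2)$ and $|\rho_v(r)|\le\mathfrak{p}_0\mathfrak{K}_\infty e^{-\mathfrak{r}_0 r}$ to land on $\alpha=4\mathfrak{p}_0\mathfrak{K}_\infty/(\mathfrak{r}_0T)$ and $\beta=\alpha\mathfrak{K}_\infty$. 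Your spectral reduction is correct as far as it goes (positivity of the $\lambda_k$ is indeed what makes $\sum_k\lambda_k^2\xi_k^2\le\lambda_*\sum_k\lambda_k\xi_k^2$ legitimate) and is arguably cleaner.

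The gap is in the final calibration of $\lambda_*$, and you have located it yourself. The route ``Young's inequality on $\phi(t)=\int_0^\infty v^{\top}e^{-uA_0}g(t-u)\,du$ plus the Lyapunov bound $\mathfrak{K}_\infty\le\mathfrak{p}_0^2/(2\mathfrak{r}_0)$'' cannot deliver $\lambda_*\le c\,\mathfrak{p}_0\mathfrak{K}_\infty/(\mathfrak{r}_0T)$: Young gives $\lambda_*\le\|v^{\top}e^{-\cdot A_0}\|_{L^1}^2/T\le\mathfrak{p}_0^2/(\mathfrak{r}_0^2T)$, and converting $\mathfrak{p}_0^2/\mathfrak{r}_0^2$ into $c\,\mathfrak{p}_0\mathfrak{K}_\infty/\mathfrak{r}_0$ would require the reverse inequality $\mathfrak{p}_0/\mathfrak{r}_0\le c\,\mathfrak{K}_\infty$, i.e.\ a \emph{lower} bound on $\mathfrak{K}_\infty$, which is not available. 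The repair is to estimate $\lambda_*$ through the covariance kernel rather than through the factorisation: writing $A_f=\tfrac1TK^*K$ with $(Kg)(t)=\lan h_t,g\ran_\H$, one has $\|A_f\|_{\text{op}}=\tfrac1T\|KK^*\|_{\text{op}}$, where $KK^*$ is the integral operator on $L^2([0,T])$ with kernel $\rho_v(|t-s|)$, and Schur's test gives
\[
\|KK^*\|_{\text{op}}\;\le\;\sup_{t\in[0,T]}\int_0^T|\rho_v(|t-s|)|\,ds\;\le\;2\int_0^\infty\mathfrak{p}_0\mathfrak{K}_\infty e^{-\mathfrak{r}_0 r}\,dr\;=\;\frac{2\mathfrak{p}_0\mathfrak{K}_\infty}{\mathfrak{r}_0},
\]
hence $\lambda_*\le2\mathfrak{p}_0\mathfrak{K}_\infty/(\mathfrak{r}_0T)$ and exactly the paper's $\alpha$, $\beta$ and the stated $H_0$. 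The kernel bound $|\rho_v(r)|\le\|e^{-rA_0}\|_{\text{op}}\|C_\infty\|_{\text{op}}\le\mathfrak{p}_0 e^{-\mathfrak{r}_0 r}\mathfrak{K}_\infty$ is precisely the one the paper uses; the point is to apply it to $\rho_v$ itself, where $C_\infty$ contributes the single factor $\mathfrak{K}_\infty$, rather than to the squared $L^1$-norm of the semigroup, which produces $\mathfrak{p}_0^2$.
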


\begin{proof}
We define the centred stationary Gaussian process $Y_t^v= v^{\top} X_t$ and note that its covariance kernel is given by
$\E[Y_t^v Y_s^v]=\rho_v(|t-s|)$ with $\rho_v(r):=v^{\top}\exp(-r A_0)C_\infty v$. By submultiplicativity of the operator norm we conclude that
\[
|\rho_v(r)| \leq \|\exp(-r A_0)\|_{\text{op}}\|C_\infty\|_{\text{op}}\leq \exp(-\mathfrak{r}_0 r) \mathfrak{p}_0 \mathfrak{K}_\infty. 
\]  
We observe that $(Y_t^v)_{t \in [0,T]}$ can be considered as an isonormal Gaussian process indexed by a separable Hilbert space $\H$ whose scalar product is induced by the covariance kernel of $(Y_t^v)_{t \in [0,T]}$. In particular, we can write $Y_t^v=B(h_t)$ and 
$\lan h_t, h_s \ran_{\H}= \rho_v(|t-s|)$. We introduce the quantity
\[
Z_T^v:= v^{\top} ( \wC_T-C_\infty ) v = \frac 1T \int_0^T (Y_t^v)^2 - \E[(Y_t^v)^2] dt
\]
and notice that $Z_T^v$ is an element of the second order Wiener chaos. Hence, $Z_T^v$ has a Lebesgue density and we have $L^{-1}Z_T^v= -Z_T^v/2$, and we conclude by the chain rule
that 
\begin{align*}
&\lan D Z_T^v, - D L^{-1} Z_T^v \ran_{\H}=\frac{1}{2}\|D Z_T^v\|_{\H}^2 \leq \frac{2}{T^2}\int_0^T\int_0^T |Y_t^v Y_s^v| |\rho_v(t-s)|dt ds\\[1.5 ex]
&\leq \frac{2}{T^2} \int_0^T\int_0^T (Y_t^v)^2 |\rho_v(t-s)|dt ds \leq \frac{4}{T} \int_0^\infty |\rho_v(r)| dr \big(Z_T^v+\rho_v(0)\big) \\[1.5 ex]
&\leq \frac{4}{T} \mathfrak{p}_0\mathfrak{K}_\infty \int_0^\infty \exp(-\mathfrak{r}_0 r) dr \big(Z_T^v+\mathfrak{K}_\infty\big)=\frac{4}{T} \frac{\mathfrak{p}_0\mathfrak{K}_\infty}{\mathfrak{r}_0}(Z_T^v+\mathfrak{K}_\infty).
\end{align*}
Consequently, the conditions of Theorem  \ref{Mallbound} are satisfied with $\al = \frac{4}{T} \frac{\mathfrak{p}_0\mathfrak{K}_\infty}{\mathfrak{r}_0}$ 
and $\be= \frac{4}{T} \frac{\mathfrak{p}_0\mathfrak{K}_\infty^2}{\mathfrak{r}_0}$, which completes the proof of Proposition \ref{PropH4} since
$\P(|Z_T^v|\geq x)= \P(Z_T^v\geq x) + \P(Z_T^v\leq -x)$.
\end{proof}

The statement of Proposition \ref{PropH4} corresponds to assumption (H4) in \cite{GM19}, which has been shown to be valid via  a log-Sobolev inequality only when $A_0$ is \textit{symmetric} (cf. \cite[Theorem]{GM19}). In other words, the extra assumption (H4) is not required as it directly follows from the modelling setup. 

The next theorem proves the restricted eigenvalue property.

\begin{theo} \label{RestrEigen}
Suppose that assumption (H) is satisfied and define $\mathfrak{k}_\infty:=\lambda_{\min}(C_\infty)>0$. Then for any $\epsilon_0 \in (0,1)$ it holds that 
\bee	
\P\Big(  \inf_{V \in \ca(s,c_0)} \frac{\|VX\|_{L^2}^2}{\|V\|_2^2}\geq \frac{\mathfrak{k}_\infty}{2} \Big)\geq 1-\epsilon_0,  
\eee
for all 
\bee \label{Tconcineq2}
T\geq  T_0(\epsilon_0,s,c_0):=\mathfrak{T}_0(\epsilon_0,s,c_0)
 \Big( (4s+1)\log{d}-2s\big( \log{\frac{2s}{21}}-1 \big)+\log{\frac{2}{\epsilon_0}} \Big), 
\eee
where  the constant $\mathfrak{T}_0(\epsilon_0,s,c_0)$ is defined as
$$
\mathfrak{T}_0(\epsilon_0,s,c_0)= \frac{144\mathfrak{p}_0\mathfrak{K}_\infty(c_0+2)^2(\mathfrak{k}_\infty+18(c_0+2)^2\mathfrak{K}_\infty)}{\mathfrak{r}_0 \mathfrak{k}_\infty^2}. 
$$
\end{theo}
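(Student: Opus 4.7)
My plan is to reduce the theorem to an exponential concentration bound on $\wC_T-C_\infty$ in appropriately sparse directions, combining a classical shelling decomposition of matrices in $\ca(s,c_0)$ with an $\epsilon$-net argument. As a starting point, $\|VX\|_{L^2}^2 = \text{tr}(V\wC_T V^\top)$ and $\text{tr}(VC_\infty V^\top)\geq \mathfrak{k}_\infty \|V\|_2^2$, so it suffices to show
\[
\sup_{V\in \ca(s,c_0),\; \|V\|_2=1}\bigl| \text{tr}(V(\wC_T-C_\infty)V^\top) \bigr| \leq \mathfrak{k}_\infty/2
\]
with probability at least $1-\epsilon_0$ whenever $T\geq T_0(\epsilon_0,s,c_0)$.

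Next, I would decompose $V \in \ca(s,c_0)$ via the standard shelling $V = \sum_{k\geq 0} V^{(k)}$, where $V^{(0)}$ collects the $s$ largest (in absolute value) entries of $V$ and each subsequent $V^{(k)}$ the next $s$ largest. Combining the cone constraint $\|V_{|\ia_s^c}\|_1 \leq c_0 \|V^{(0)}\|_1$, the estimate $\|V^{(0)}\|_1\leq \sqrt{s}\|V\|_2$, and the block comparison $\|V^{(k+1)}\|_2\leq \|V^{(k)}\|_1/\sqrt{s}$ yields the key bound $\sum_{k\geq 0}\|V^{(k)}\|_2 \leq (c_0+2)\|V\|_2$. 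Polarizing the symmetric bilinear form $(U,U')\mapsto \text{tr}(U(\wC_T-C_\infty)U'^\top)$ then gives $|\text{tr}(U(\wC_T-C_\infty)U'^\top)| \leq \tau_{2s}\,\|U\|_2\|U'\|_2$ for all $s$-sparse $U,U'$, where
\[
\tau_{2s} := \sup\bigl\{ |\text{tr}(W(\wC_T-C_\infty)W^\top)|:\; W\in\R^{d\times d},\; \|W\|_0\leq 2s,\; \|W\|_2=1 \bigr\}.
\]
Summing over the shelling decomposition reduces the theorem to showing $\tau_{2s} \leq \mathfrak{k}_\infty/(2(c_0+2)^2)$ with high probability, which accounts for the factor $(c_0+2)^2$ in $\mathfrak{T}_0$.

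For $\tau_{2s}$ I would build an $\epsilon$-net (with $\epsilon=1/4$) on the unit Frobenius sphere of each $2s$-sparse coordinate subspace of $\R^{d\times d}$, giving at most $\binom{d^2}{2s}\cdot 12^{2s}$ test matrices, and use the standard continuity estimate $\tau_{2s}\leq 2\max_{W\in\text{net}} |\text{tr}(W(\wC_T-C_\infty)W^\top)|$. For each fixed $W$ the quantity $\text{tr}(W(\wC_T-C_\infty)W^\top) = T^{-1}\int_0^T(|WX_t|^2 - \E|WX_t|^2)\,dt$ lies in the second Wiener chaos; applying Theorem \ref{Mallbound} via the same Malliavin computation as in Proposition \ref{PropH4} — which relies only on submultiplicativity of the operator norm and the exponential decay $\|\exp(-rA_0)\|_{\text{op}}\leq \mathfrak{p}_0 e^{-\mathfrak{r}_0 r}$ — yields the tail $\P(|\text{tr}(W(\wC_T-C_\infty)W^\top)|\geq x)\leq 2\exp(-TH_0(x))$. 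A union bound over the net, the choice $x \sim \mathfrak{k}_\infty/(c_0+2)^2$, and the Stirling-type inequality $\log\binom{d^2}{2s}\leq 2s\log(ed^2/(2s)) = 4s\log d - 2s\log(2s/e)$ then produce a lower bound on $T$ of the same form as the bracket $(4s+1)\log d - 2s(\log(2s/21)-1) + \log(2/\epsilon_0)$ in the definition of $T_0$.

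The main obstacle is extending Proposition \ref{PropH4} from the scalar quadratic form $v^\top(\wC_T-C_\infty)v$ to the matrix form $\text{tr}(W(\wC_T-C_\infty)W^\top)$: one has to redo the Malliavin calculation for the $d$-dimensional Gaussian process $(WX_t)_{t\in[0,T]}$. All ingredients survive unchanged — the cross-covariance $\E[(WX_t)^\top(WX_s)]$ decays at the same exponential rate, and $\|WC_\infty W^\top\|_{\text{op}}\leq \mathfrak{K}_\infty$ since $\|W\|_2=1$ — so the same function $H_0$ (up to harmless constants) governs the tail. Careful bookkeeping through the shelling factor $(c_0+2)^2$, the polarization and $\epsilon$-net constants, and the Stirling remainder then yields the precise form of $\mathfrak{T}_0$.
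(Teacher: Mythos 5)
Your proposal is correct in substance and follows the same overall architecture as the paper (reduce to a deviation bound for $\wC_T-C_\infty$ on sparse directions, then a net plus union bound plus a second-chaos tail estimate), but it differs in the two key technical steps. First, to pass from the cone $\ca(s,c_0)$ to sparse matrices you use the classical shelling decomposition together with polarization of the bilinear form, whereas the paper invokes Lemma \ref{lemSM1}, i.e.\ the Basu--Michailidis convex-hull approximation $\ca(s,c_0)\cap\mathbb{B}_2(1)\subseteq(c_0+2)\,\mathrm{cl}(\mathrm{conv}(\mathcal{K}(s)))$ followed by their comparison of the supremum over the convex hull with the supremum over $\mathcal{K}(2s)$; both routes produce the factor $(c_0+2)^2$, the paper paying an extra factor $3$ and you paying nothing beyond polarization. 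Second, and more significantly, you propose to prove the concentration of $\mathrm{tr}(W(\wC_T-C_\infty)W^{\top})$ for a fixed sparse $W$ by redoing the Malliavin computation for the vector-valued process $WX_t$; the paper avoids this entirely by writing $\mathrm{tr}(V(C_\infty-\wC_T)V^{\top})=\sum_j v^j(C_\infty-\wC_T)(v^j)^{\top}$ and observing that if the ratio of the sum to $\sum_j\|v^j\|_2^2$ exceeds $x$ then some individual row ratio must, so the scalar Proposition \ref{PropH4} plus a union bound over the $d$ rows suffices (this is the source of the $2d$ prefactor in the paper's bound, which your approach would not incur). Your matrix-level extension does go through --- the cross-covariances $w^i e^{-rA_0}C_\infty(w^j)^{\top}$ obey the same exponential bound and $\mathrm{tr}(WC_\infty W^{\top})\leq\mathfrak{K}_\infty\|W\|_2^2$ --- but it requires genuinely rewriting the proof of Proposition \ref{PropH4} rather than citing it. The one caveat is that your accounting (a $1/4$-net with factor $2$, no factor $3$ from the convex-hull lemma) leads to a threshold of the form $x\sim\mathfrak{k}_\infty/(4(c_0+2)^2)$ rather than the paper's $\mathfrak{k}_\infty/(18(c_0+2)^2)$, and to a different net cardinality; consequently you would prove the theorem with a different (in fact somewhat smaller) constant in place of $144$ and with $21$ replaced by your net constant in the logarithmic bracket, so the statement you establish is a variant of \eqref{Tconcineq2} with the same structure but not the literal constants displayed there.
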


\begin{proof} See Section \ref{sec6.2}.
\end{proof}

The next corollary presents a deviation bound for the quantity $\wC_T$. 

\begin{cor} \label{cor1}
For any $\epsilon_0>0$ and $T\geq  T_0(\epsilon_0,s,c_0)$ it holds that 
\bee	
\P\left(  \inf_{V \in \mathcal{C}(s,c_0)} \frac{\|VX\|_{L^2}^2}{\|V\|_{2}^2}\geq \frac{\mathfrak{k}_\infty}{2},~  \|\text{\rm diag } \wC_T\|_\infty \leq  \mathfrak{m}_\infty+ \frac{\mathfrak{k}_\infty}{2} ,~
\| \wC_T\|_\infty  \leq \mathfrak{M}_\infty +\frac{3\mathfrak{k}_\infty}{2}
\right)\geq 1-\epsilon_0,  
\eee
where $\mathfrak{m}_{\infty} := \|\text{\rm diag }C_{\infty}\|_{\infty}$ and $\mathfrak{M}_{\infty} := \|C_\infty\|_{\infty}$.
\end{cor}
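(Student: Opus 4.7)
The plan is to combine Theorem \ref{RestrEigen} with an entry-wise deviation bound for $\wC_T-C_\infty$ derived from Proposition \ref{PropH4}, and then apply a union bound over the restricted-eigenvalue event and the entry-wise event. Concretely, I split the error budget as $\epsilon_0/2$ for each of the two events.

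For the restricted eigenvalue part I apply Theorem \ref{RestrEigen} with $\epsilon_0/2$ in place of $\epsilon_0$. Since $\log(2/(\epsilon_0/2))=\log(2/\epsilon_0)+\log 2$, the threshold $T_0(\epsilon_0/2,s,c_0)$ exceeds $T_0(\epsilon_0,s,c_0)$ only by the additive constant $\mathfrak{T}_0\log 2$, which can be absorbed into $\mathfrak{T}_0$; the hypothesis $T\geq T_0(\epsilon_0,s,c_0)$ stated in the corollary is then sufficient, and it yields the first event with probability at least $1-\epsilon_0/2$.

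For the other two bounds I would first establish the entry-wise deviation $\|\wC_T-C_\infty\|_\infty \leq \mathfrak{k}_\infty/2$. To do so I apply Proposition \ref{PropH4} to the diagonal through $v=e_i$ (giving $v^{\top}(\wC_T-C_\infty)v=(\wC_T-C_\infty)_{ii}$), and to the off-diagonals via the polarization identity with the unit vectors $v_{\pm}=(e_i\pm e_j)/\sqrt{2}$: since $\wC_T$ and $C_\infty$ are symmetric,
\[
(\wC_T-C_\infty)_{ij}=\tfrac{1}{2}\bigl(v_+^{\top}(\wC_T-C_\infty)v_+ - v_-^{\top}(\wC_T-C_\infty)v_-\bigr).
\]
The total number of unit vectors invoked is $d+d(d-1)=d^2$. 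A union bound combined with Proposition \ref{PropH4} applied with threshold $x=\mathfrak{k}_\infty/2$ yields
\[
\P\bigl(\|\wC_T-C_\infty\|_\infty \geq \mathfrak{k}_\infty/2\bigr) \leq 2d^2 \exp\bigl(-T\, H_0(\mathfrak{k}_\infty/2)\bigr),
\]
which is at most $\epsilon_0/2$ as soon as $T$ is of order $\log(d^2/\epsilon_0)/H_0(\mathfrak{k}_\infty/2)$. Since the lower bound in \eqref{Tconcineq2} already contains the dominant term $(4s+1)\log d$, this requirement is comfortably implied by $T\geq T_0(\epsilon_0,s,c_0)$.

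On the intersection of the two high-probability events, the triangle inequality together with $(\wC_T)_{ii}\geq 0$ gives $\|\text{diag}(\wC_T)\|_\infty\leq \mathfrak{m}_\infty+\mathfrak{k}_\infty/2$, while entry-wise $\|\wC_T\|_\infty\leq \mathfrak{M}_\infty+\mathfrak{k}_\infty/2\leq \mathfrak{M}_\infty+3\mathfrak{k}_\infty/2$ (the latter bound being deliberately loose). All three events therefore hold simultaneously with probability at least $1-\epsilon_0$, completing the proof. The only genuine care in the argument lies in the polarization step and in the constant-level bookkeeping to verify that the single threshold $T_0(\epsilon_0,s,c_0)$ absorbs both the restricted-eigenvalue requirement and the entry-wise union-bound requirement; I do not expect any substantive obstacle.
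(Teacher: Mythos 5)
Your route is genuinely different from the paper's and is workable in substance, but as written it does not deliver the corollary with the stated threshold. The paper does not split the probability budget at all: it observes that all three events are \emph{deterministic} consequences of the single concentration event $\sup_{V \in \ca(s,c_0)} |\text{tr}(V(C_\infty - \wC_T)V^{\top})|/\|V\|_2^2 \leq \mathfrak{k}_\infty/2$, which is exactly the event already shown to have probability at least $1-\epsilon_0$ for $T \geq T_0(\epsilon_0,s,c_0)$ inside the proof of Theorem \ref{RestrEigen} (via \eqref{decom} and \eqref{mainineq}). The point is that the matrices with a single unit entry, and the sums of two such matrices, all lie in the cone $\ca(s,c_0)$, so the diagonal bound follows by testing the sup against the single-entry matrices, and the full $\|\cdot\|_\infty$ bound follows from the same polarization you use, carried out inside the cone; the factor $3$ picked up there is precisely why the third event reads $\mathfrak{M}_\infty + 3\mathfrak{k}_\infty/2$ rather than your sharper $\mathfrak{M}_\infty + \mathfrak{k}_\infty/2$. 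No union bound and no fresh application of Proposition \ref{PropH4} are needed.

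The concrete shortfall in your argument is the first step: applying Theorem \ref{RestrEigen} with $\epsilon_0/2$ requires $T \geq T_0(\epsilon_0/2,s,c_0) = T_0(\epsilon_0,s,c_0) + \mathfrak{T}_0(\epsilon_0,s,c_0)\log 2$, which is strictly larger than the hypothesis of the corollary; ``absorbing $\log 2$ into $\mathfrak{T}_0$'' amounts to changing the explicitly defined constant in \eqref{Tconcineq2}, i.e.\ to proving a slightly different statement. Your second half --- the $d^2$-fold union bound over $e_i$ and $(e_i\pm e_j)/\sqrt{2}$ combined with Proposition \ref{PropH4} at $x = \mathfrak{k}_\infty/2$ --- is correct, and the claim that $T_0$ dominates that requirement is plausible given the large leading constant in $\mathfrak{T}_0$, though it also deserves an explicit line of verification rather than the assertion that it is ``comfortably implied''. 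If you instead note that the proof of Theorem \ref{RestrEigen} in fact controls the supremum of the quadratic form over the cone, all three bounds come out simultaneously on one event and the corollary follows with the exact constants as stated.
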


\begin{proof} 
See Section \ref{sec6.3}.
\end{proof}

\subsection{Deviation bounds for the martingale term $\ep_T$ and final estimates} \label{sec3.2}
As mentioned earlier controlling the stochastic term $\lan \ep_T, V \ran_{\text{F}}$ for matrices $V \in \R^{d\times d}$ is crucial for the analysis 
of the estimators $\wA_{\text{L}}$ and  $\wA_{\text{D}}$. The martingale property of $\ep_T$ turns out to be the key in the next proposition. We remark
that the following result is an improvement of \cite[Theorem 8]{GM19}. 

\begin{prop} \label{MartProp} 
For any  $\epsilon_0 \in(0,1)$ the following inequality holds: 
\bee \label{ineqeps}
\P\left( \sup_{V \in \R^{d\times d}, V\not=0} \frac{\lan \ep_T,V \ran_{\text{\rm F}}}{\|V\|_1} \geq \mu \right) \leq \epsilon_0
\eee
for any 
\bee\label{Tconcineq}
T\geq  \frac{48\mathfrak{p}_0\mathfrak{K}_\infty}{\mathfrak{r}_0}\frac{\mathfrak{k}_\infty+6\mathfrak{K}_\infty}{\mathfrak{k}_\infty^2} \big( (2s+1)\ln{d}-s(\ln{s}-1)+\ln{(4/\epsilon_0)} \big) 
\eee
and 
\bee \label{muforeps}
\mu \geq \sqrt{\big(2\mathfrak{m}_\infty+\mathfrak{k}_\infty \big) \frac{\ln{(2d^2/\epsilon_0)}}{T}}.
\eee
\end{prop}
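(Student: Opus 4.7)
The plan is to reduce the supremum to a maximum-entry bound and then apply an exponential martingale inequality to each entry, with a preliminary truncation on the quadratic variation provided by the earlier concentration results.

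First I would observe that by duality between $\|\cdot\|_1$ and $\|\cdot\|_\infty$ on $\R^{d\times d}$ (with respect to the Frobenius pairing),
\[
\sup_{V\in \R^{d\times d}\setminus\{0\}} \frac{\lan \ep_T, V\ran_{\text{F}}}{\|V\|_1}=\|\ep_T\|_\infty
=\max_{1\leq i,j\leq d} \frac{1}{T}\left| \int_0^T X_t^j\, dW_t^i\right|.
\]
For each pair $(i,j)$, the process $M^{ij}_t:=\int_0^t X_s^j\, dW_s^i$ is a continuous martingale with quadratic variation $[M^{ij}]_T=\int_0^T (X_t^j)^2\,dt=T(\wC_T)_{jj}$. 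Since $(X_t^j)^2$ appears in $[M^{ij}]_T$, the key step is to bound the quadratic variation uniformly in $j$ using the diagonal part of $\wC_T$.

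Second, I would split the probability budget in two. Using Proposition \ref{PropH4} with $v=e_j$ for each $j=1,\dots,d$ and a union bound (this is essentially the diagonal part of Corollary \ref{cor1}), the event
\[
\mathcal{A}:=\left\{ \|\text{diag}\, \wC_T\|_\infty \leq \mathfrak{m}_\infty + \tfrac{\mathfrak{k}_\infty}{2}\right\}
\]
has probability at least $1-\epsilon_0/2$ whenever $T$ satisfies the lower bound stated in \eqref{Tconcineq}; this is where the $T$ threshold in the proposition comes from, after inverting the function $H_0$ at the level $\mathfrak{k}_\infty/2$. On $\mathcal{A}$ one has $[M^{ij}]_T\leq T(\mathfrak{m}_\infty+\mathfrak{k}_\infty/2)$ for every $(i,j)$.

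Third, I would invoke the standard Bernstein-type exponential inequality for continuous local martingales: for any $x,y>0$,
\[
\P\bigl(|M^{ij}_T|\geq x,\ [M^{ij}]_T\leq y\bigr) \leq 2\exp\bigl(-x^2/(2y)\bigr).
\]
Applying this with $x=T\mu$ and $y=T(\mathfrak{m}_\infty+\mathfrak{k}_\infty/2)$, then taking a union bound over the $d^2$ entries, one gets
\[
\P\bigl( \|\ep_T\|_\infty \geq \mu,\ \mathcal{A}\bigr) \leq 2d^2\exp\!\left(-\frac{T\mu^2}{2\mathfrak{m}_\infty+\mathfrak{k}_\infty}\right),
\]
which is at most $\epsilon_0/2$ precisely when $\mu$ satisfies \eqref{muforeps} (up to the $\ln(2d^2/\epsilon_0)$ versus $\ln(4d^2/\epsilon_0)$ constant, absorbed into the splitting). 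Combining with $\P(\mathcal{A}^c)\leq \epsilon_0/2$ yields the desired bound.

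The main obstacle is not any individual step but the need to control the martingale against its own random quadratic variation: one cannot directly apply a scalar Gaussian tail bound because $[M^{ij}]_T$ is stochastic. The trick is the truncation via the event $\mathcal{A}$, which transfers the problem to the already-proved quadratic concentration for $\wC_T$ and thereby explains why the two terms in \eqref{Tconcineq} and \eqref{muforeps} have the specific logarithmic shapes they do. Compared to \cite[Theorem 8]{GM19}, the improvement comes from exploiting the martingale structure and Bernstein's inequality directly, rather than bounding $\|\ep_T\|_\infty$ by cruder Gaussian-type estimates that would produce an extra $\sqrt{\ln d}$ factor on the rate.
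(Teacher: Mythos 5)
Your proof is correct and follows essentially the same route as the paper's: reduce the supremum to $\|\ep_T\|_\infty$ by $l_1$--$l_\infty$ duality, truncate the quadratic variation via the concentration of $\text{diag}\,\wC_T$ (spending $\epsilon_0/2$ of the probability budget), and apply Bernstein's inequality for continuous local martingales entrywise with a union bound. The only differences are cosmetic: you invoke Proposition \ref{PropH4} with $v=e_j$ directly rather than citing Corollary \ref{cor1}, and you track the two-sided union bound (your $2d^2$ versus the paper's $d^2$) more carefully than the paper itself does.
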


\begin{proof}
We first recall Bernstein's inequality for continuous local martingales. Let $(M_t)_{t\geq 0}$ be a real-valued continuous local martingale with 
quadratic variation $(\lan M\ran_t)_{t\geq 0}$. Then for any $a,b>0$ it holds that
\bee \label{Bern}
\P(M_t\geq a, \lan M\ran_t \leq b) \leq \exp(-a^2/(2b)). 
\eee
This result is a straightforward consequence of exponential martingale technique (cf. Chapter 4, Exercise 3.16 in \cite{RY05}).

By definition $\ep_T^{ij}=\frac{1}{T}\int_0^TdW_t^i X_t^j$ is a continuous martingale with $\lan \ep^{ij} \ran_T=\frac{1}{T}\wC_T^{ii}$. Therefore, we obtain
by   Corollary \ref{cor1} and \eqref{Bern}
\begin{gather*}
\P\left( \sup_{V \in \R^{d\times d}, V\not=0} \frac{\lan \ep_T,V \ran_{\text{\rm F}}}{\|V\|_1} \geq\mu \right) \leq \P\Big(  \|\text{diag }\wC_T\|_\infty > \mathfrak{m}_\infty+ \frac{\mathfrak{k}_\infty}{2}\Big) \\
+ \P\left( \sup_{V \in \R^{d\times d}, V\not=0} \frac{\lan \ep_T,V \ran_{\text{\rm F}}}{\|V\|_1} \geq\mu,~ \|\text{diag }\wC_T\|_\infty \leq \mathfrak{m}_\infty+ \frac{\mathfrak{k}_\infty}{2}\right) \\
 \leq \sum_{i,j=1}^d \P \Big( \ep_T^{ij} \geq \mu,~ \langle \ep^{ij} \rangle_T \leq \frac{1}{T}\big( \mathfrak{m}_\infty+ \frac{\mathfrak{k}_\infty}{2}\big) \Big)  +\frac{\epsilon_0}{2}\\
 \leq d^2 \exp{\Big( -T \frac{\mu^2}{2 \mathfrak{m}_\infty+\mathfrak{k}_\infty} \Big) }+\frac{\epsilon_0}{2} \leq \epsilon_0,
\end{gather*}
which completes the proof. 
\end{proof}

Summarising all previous deviation bounds we obtain the following result.

\begin{cor} \label{cor2}
For  $s\geq s_0$ and $c_0>0$ define the event
\begin{gather*}
\mathcal{E}(s,c_0):=\Big\{\inf_{V \in \mathcal{C}(s,c_0)} \frac{\|VX\|_{L^2}^2}{\|V\|_{2}^2}\geq \frac{\mathfrak{k}_\infty}{2} \Big\} 
\bigcap \Big\{ \sup_{V \neq 0} \frac{\langle \ep_T, V \rangle_{\text{F}} }{\|V\|_1}\leq \frac{\lambda}{2} \Big\} \\
\bigcap \Big\{ \|\ep_T\|_\infty \leq \frac{\lambda}{2} \Big\}
\bigcap\Big\{  \|\wC_T\|_\infty \leq \mathfrak{M}_\infty +\frac{3\mathfrak{k}_\infty}{2} \Big\}.
\end{gather*}
Then, for any $\epsilon_0 \in (0,1)$, it holds that $\P(\mathcal{E}(s,c_0)) \geq 1- \epsilon_0$ for any $T \geq T_0\big( \epsilon_0/2,s,c_0 \big)$ 
and 
\bee
\lambda \geq 2 \sqrt{\big(2\mathfrak{m}_\infty+\mathfrak{k}_\infty \big) \frac{\ln{(2d^2/\epsilon_0)}}{T}}.
\eee
\end{cor}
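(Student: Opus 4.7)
The plan is to derive the corollary by a union bound that aggregates Corollary \ref{cor1} and a direct Bernstein-type argument for the martingale term, splitting the total failure probability as $\epsilon_0/2 + \epsilon_0/2$.

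First I would record a simple reduction: the dual norm of $\|\cdot\|_1$ on $\R^{d\times d}$ is $\|\cdot\|_\infty$, so the two events
\[
\Big\{ \sup_{V\neq 0} \frac{\langle \ep_T,V\rangle_{\text{F}}}{\|V\|_1}\leq \frac{\lambda}{2} \Big\} \quad \text{and} \quad \Big\{\|\ep_T\|_\infty \leq \frac{\lambda}{2}\Big\}
\]
appearing in the definition of $\mathcal{E}(s,c_0)$ coincide, and only three distinct events actually need to be controlled.

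Next, I would apply Corollary \ref{cor1} with confidence level $\epsilon_0/2$. For $T\geq T_0(\epsilon_0/2,s,c_0)$ this yields simultaneously, with joint probability at least $1-\epsilon_0/2$, the restricted eigenvalue event, the uniform bound $\|\wC_T\|_\infty\leq \mathfrak{M}_\infty+3\mathfrak{k}_\infty/2$, and---crucially for the next step---the diagonal estimate $\|\text{diag}\,\wC_T\|_\infty\leq \mathfrak{m}_\infty+\mathfrak{k}_\infty/2$.

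For the remaining martingale event I would prefer not to invoke Proposition \ref{MartProp} as a black box (which would redundantly re-call Corollary \ref{cor1} and waste probability budget), and instead reuse the diagonal estimate just obtained: on that event, the quadratic variation of each entry satisfies $\langle \ep^{ij}\rangle_T = \wC_T^{jj}/T\leq (\mathfrak{m}_\infty+\mathfrak{k}_\infty/2)/T$. Applying Bernstein's inequality \eqref{Bern} to $\pm \ep_T^{ij}$ and taking a union bound over the $d^2$ pairs produces a sub-Gaussian tail whose exponent, under the stated lower bound on $\lambda$, is at most $\epsilon_0/2$. A final union bound closes the argument. There is no genuine obstacle here; the only point requiring care is matching the numerical constants in the two halves of the probability budget to the exact expressions stated for $T$ and $\lambda$.
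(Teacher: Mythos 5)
Your proposal is, in substance, the proof the paper intends: Corollary \ref{cor2} is presented as a summary, and the argument is a union bound combining Corollary \ref{cor1} at level $\epsilon_0/2$ (whence the requirement $T\geq T_0(\epsilon_0/2,s,c_0)$) with a Bernstein bound for the entries of $\ep_T$. Your two refinements are both sound and in fact needed to land on the stated constants: the duality observation that the second and third events in $\mathcal{E}(s,c_0)$ coincide (since $\sup_{V\neq 0}\lan\ep_T,V\ran_{\text{F}}/\|V\|_1=\|\ep_T\|_\infty$), and the reuse of the bound $\|\text{diag}\,\wC_T\|_\infty\leq \mathfrak{m}_\infty+\mathfrak{k}_\infty/2$ already secured on the Corollary \ref{cor1} event, so that no extra probability is spent on it; calling Proposition \ref{MartProp} as a black box at level $\epsilon_0/2$ would instead force $\ln(4d^2/\epsilon_0)$ into the threshold for $\la$. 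The one place your bookkeeping does not quite close is the sign: controlling $\|\ep_T\|_\infty$ requires Bernstein applied to both $\ep_T^{ij}$ and $-\ep_T^{ij}$, i.e.\ $2d^2$ terms, each at most $\exp\big(-T\la^2/(4(2\mathfrak{m}_\infty+\mathfrak{k}_\infty))\big)\leq \epsilon_0/(2d^2)$ under the stated $\la$, so the martingale part costs $\epsilon_0$ rather than the $\epsilon_0/2$ you claim, and the total budget comes to $3\epsilon_0/2$. This is not a flaw in your strategy but a factor of two the paper itself elides (the proof of Proposition \ref{MartProp} sums only over $\P(\ep_T^{ij}\geq\mu,\cdot)$ while bounding a quantity equal to $\|\ep_T\|_\infty$); to be airtight one should replace $\ln(2d^2/\epsilon_0)$ by $\ln(4d^2/\epsilon_0)$ in the condition on $\la$, which changes nothing downstream.
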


\section{Oracle inequalities and error bounds for the Lasso and Dantzig estimators} \label{sec4}
\setcounter{equation}{0}
\renewcommand{\theequation}{\thesection.\arabic{equation}}

In this section we present the main theoretical results for the Lasso and Dantzig estimators. More specifically, we derive oracle inequalities 
for $\wA_{\text{L}}$ and $\wA_{\text{D}}$, and show the error bounds for the norms $\|\cdot\|_{L^2}$, $\|\cdot\|_{1}$ and $\|\cdot\|_{2}$. In particular, 
we establish the asymptotic equivalence between the  Lasso and Dantzig estimators. 

\subsection{Properties of the Lasso estimator} \label{sec4.1}
We start this subsection with proving a statement, which is important for obtaining oracle inequality for the Lasso estimator $\wA_{\text{L}}$.

\begin{lem} \label{lem2} 
Suppose that condition \eqref{constraint} holds. For any matrix $A \in \R^{d \times d}\setminus\{ 0 \}$ denote $\mathcal{A}:=\text{\rm supp}(A)$. Then for any $s\geq s_0$ and $c_0>0$ on $\mathcal{E}(s,c_0)$ the following inequality holds: 
\bee \label{IneqLemmaLasso}
 \|(\wA_{\text{L}}-A_0)X\|_{L^2}^2 + \la \|\wA_{\text{L}}-A\|_1 
\leq \|(A-A_0)X\|_{L^2}^2 + 4\la \|\wA_{\text{L}|\mathcal{A}}-A\|_1. 
\eee
In particular, it implies that $\wA_{\text{L}}-A_0 \in \mathcal{C}(s_0,3)$ on $\mathcal{E}(s,c_0)$. 
\end{lem}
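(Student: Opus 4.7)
The starting point is the basic inequality from Lemma \ref{lem1} applied with a general matrix $A$. On the event $\mathcal{E}(s,c_0)$ the bound $\sup_{V\neq 0} \langle \ep_T,V\rangle_F/\|V\|_1 \leq \la/2$ is in force, so taking $V = A-\wA_{\text{L}}$ immediately yields
\[
2\langle \ep_T,A-\wA_{\text{L}}\rangle_F \;\leq\; \la \|\wA_{\text{L}}-A\|_1.
\]
Plugging this into Lemma \ref{lem1} gives
\[
\|(\wA_{\text{L}}-A_0)X\|_{L^2}^2 \;\leq\; \|(A-A_0)X\|_{L^2}^2 - \|(A-\wA_{\text{L}})X\|_{L^2}^2 + \la\|\wA_{\text{L}}-A\|_1 + 2\la(\|A\|_1-\|\wA_{\text{L}}\|_1).
\]

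The next step is a standard support-decomposition of the $l_1$-terms along $\mathcal{A}=\text{supp}(A)$. Since $A$ is supported on $\mathcal{A}$, one has $\|A\|_1=\|A_{|\mathcal{A}}\|_1$ and
\[
\|\wA_{\text{L}}\|_1 = \|\wA_{\text{L}|\mathcal{A}}\|_1 + \|\wA_{\text{L}|\mathcal{A}^c}\|_1, \qquad \|\wA_{\text{L}}-A\|_1 = \|\wA_{\text{L}|\mathcal{A}}-A\|_1 + \|\wA_{\text{L}|\mathcal{A}^c}\|_1.
\]
The triangle inequality on the coordinates in $\mathcal{A}$ gives $\|A_{|\mathcal{A}}\|_1 - \|\wA_{\text{L}|\mathcal{A}}\|_1 \leq \|\wA_{\text{L}|\mathcal{A}}-A\|_1$, hence
\[
\|A\|_1-\|\wA_{\text{L}}\|_1 \;\leq\; \|\wA_{\text{L}|\mathcal{A}}-A\|_1 - \|\wA_{\text{L}|\mathcal{A}^c}\|_1.
\]
Substituting this and the decomposition of $\|\wA_{\text{L}}-A\|_1$ into the previous display and adding $\la\|\wA_{\text{L}}-A\|_1$ to both sides causes the $\|\wA_{\text{L}|\mathcal{A}^c}\|_1$ contributions to cancel (in fact to absorb into a nonpositive remainder), and the $\|\wA_{\text{L}|\mathcal{A}}-A\|_1$ terms combine to the coefficient $4\la$. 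Dropping the nonpositive term $-\|(A-\wA_{\text{L}})X\|_{L^2}^2$ then yields \eqref{IneqLemmaLasso}.

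For the cone membership, specialise to $A=A_0$, whose support $\mathcal{A}_0$ satisfies $|\mathcal{A}_0|\leq s_0$. The inequality \eqref{IneqLemmaLasso} reduces to
\[
\la \|\wA_{\text{L}}-A_0\|_1 \;\leq\; 4\la\,\|\wA_{\text{L}|\mathcal{A}_0}-A_0\|_1 = 4\la\,\|(\wA_{\text{L}}-A_0)_{|\mathcal{A}_0}\|_1,
\]
since $A_0$ is supported on $\mathcal{A}_0$. Because $|\mathcal{A}_0|\leq s_0$, the restriction to $\mathcal{A}_0$ is bounded in $l_1$-norm by the restriction to the set of the $s_0$ largest coordinates $\mathcal{I}_{s_0}(\wA_{\text{L}}-A_0)$, which upgrades the estimate to $\|\wA_{\text{L}}-A_0\|_1 \leq 4\|(\wA_{\text{L}}-A_0)_{|\mathcal{I}_{s_0}(\wA_{\text{L}}-A_0)}\|_1$, i.e.\ $\wA_{\text{L}}-A_0\in \mathcal{C}(s_0,3)$.

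I do not expect a real obstacle here: everything is bookkeeping on top of Lemma \ref{lem1} and the event $\mathcal{E}(s,c_0)$. The only point that requires mild care is the identification of the cone parameter — making sure that the coefficient on $\|\wA_{\text{L}|\mathcal{A}}-A\|_1$ after the cancellation lands exactly at $4\la$ (so that $1+c_0=4$, i.e.\ $c_0=3$), and that the passage from the fixed support $\mathcal{A}_0$ to the $s_0$ largest coordinates is justified by $|\mathcal{A}_0|\leq s_0$.
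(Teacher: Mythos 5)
Your proof is correct and follows essentially the same route as the paper: apply Lemma \ref{lem1}, bound the martingale term via the event $\mathcal{E}(s,c_0)$, and use the support decomposition $\|\wA_{\text{L}}-A\|_1+\|A\|_1-\|\wA_{\text{L}}\|_1\leq 2\|(\wA_{\text{L}}-A)_{|\mathcal{A}}\|_1$ (which you merely spell out in more detail). The cone-membership argument via $A=A_0$ and the passage from $\mathcal{A}_0$ to $\mathcal{I}_{s_0}(\wA_{\text{L}}-A_0)$ is likewise identical to the paper's.
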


\begin{proof}
Let us set  $\de_L(A):=A-\wA_{\text{L}}$. Applying Lemma \ref{lem1} we obtain the following inequality 
\begin{gather*}
 \|(\wA_{\text{L}}-A_0)X\|_{L^2}^2+ \la \|\de_L(A) \|_1 \\[1.0 ex]
 \leq   \|(A-A_0)X\|_{L^2}^2+2 \lan \ep_T, \de_L(A)\ran_{\text{F}}  +\la \|\de_L(A) \|_1 + 2\la \big( \|A\|_1 - \|\wA_{\text{L}}\|_1 \big). 
 \end{gather*}
 Hence, on $\mathcal{E}(s,c_0)$ it holds that 
 \begin{gather*}
 \|(\wA_{\text{L}}-A_0)X\|_{L^2}^2+ \la \|\de_L(A) \|_1
 \leq   \|(A-A_0)X\|_{L^2}^2+ 2\la \big(\|\de_L(A) \|_1+ \|A\|_1 - \|\wA_{\text{L}}\|_1 \big). 
 \end{gather*}
 We observe next that $\|\de_L(A) \|_1+ \|A\|_1 - \|\wA_{\text{L}}\|_1  \leq 2 \|\de_L(A)_{|\mathcal A} \|_1$, which immediately implies \eqref{IneqLemmaLasso}. Applying \eqref{IneqLemmaLasso} to $A=A_0$ we deduce that 
$$
\|\de_L(A_0)\|_1 \leq 4 \|\de_L(A_0)_{|\mathcal{A}}\|_1 \leq 4\|\de_L(A_0)_{| \mathcal{I}_{s_0}(\de_L(A_0))}\|_1, 
$$
where the last inequality holds due to the sparsity assumption $\|A_0\|_0\leq s_0$. Consequently, $\wA_{\text{L}}-A_0 \in \mathcal{C}(s_0,3)$
and the proof is complete. 
\end{proof}

We are now in the position to present an oracle inequality for the Lasso estimator $\wA_{\text{L}}$, which is one of the main results of our paper. 

\begin{theo} \label{Lassothm}
Fix $\gamma>0$ and $\epsilon_0 \in (0,1)$. Consider the Lasso estimator $\wA_{\text{L}}$ defined at \eqref{lasso} and assume that condition (H) holds. Then for
\bee
\la \geq 2 \sqrt{\big(2\mathfrak{m}_\infty+\mathfrak{k}_\infty \big) \frac{\ln{(2d^2/\epsilon_0)}}{T}}
\eee
and $T\geq T_0\big( \epsilon_0/2, s_0,3+4/ \ga \big)$, with probability at least $1-\epsilon_0$ it holds that
\bee
\|(\wA_{\text{L}}-A_0)X\|_{L^2}^2  \leq (1+\gamma) \inf_{A:~ \|A\|_0\leq s_0}\Big\{ \|(A-A_0)X\|_{L^2}^2 +  \frac{9(2+\gamma)^2}{2\mathfrak{k}_\infty\gamma(1+\gamma)} \|A\|_0 \lambda^2\Big\}. 
\eee
\end{theo}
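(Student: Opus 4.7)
The plan is to work on the event $\mathcal{E}(s_0, 3+4/\gamma)$ supplied by Corollary \ref{cor2}, which, under the stated conditions on $\lambda$ and $T\geq T_0(\epsilon_0/2,s_0,3+4/\gamma)$, has probability at least $1-\epsilon_0$. Throughout I would fix an arbitrary competitor $A$ with $\|A\|_0\leq s_0$, write $\mathcal{A}:=\text{supp}(A)$ so that $|\mathcal{A}|\leq s_0$, and invoke Lemma \ref{lem2} to obtain the key inequality
\[
\|(\wA_{\text{L}}-A_0)X\|_{L^2}^2 + \la\|\wA_{\text{L}}-A\|_1 \leq \|(A-A_0)X\|_{L^2}^2 + 4\la\|(\wA_{\text{L}}-A)_{|\mathcal{A}}\|_1.
\]
The strategy is then a two-case dichotomy depending on the relative size of $4\la\|(\wA_{\text{L}}-A)_{|\mathcal{A}}\|_1$ and $\gamma\|(A-A_0)X\|_{L^2}^2$.

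In the easy case, when $4\la\|(\wA_{\text{L}}-A)_{|\mathcal{A}}\|_1\leq \gamma\|(A-A_0)X\|_{L^2}^2$, the display above immediately yields $\|(\wA_{\text{L}}-A_0)X\|_{L^2}^2\leq (1+\gamma)\|(A-A_0)X\|_{L^2}^2$, which is stronger than the target bound. In the complementary case, I would first establish the cone membership $\wA_{\text{L}}-A \in \mathcal{C}(s_0, 3+4/\gamma)$: dropping the nonnegative prediction term on the left of the Lemma \ref{lem2} inequality gives
\[
\la\|(\wA_{\text{L}}-A)_{|\mathcal{A}^c}\|_1 \leq 3\la\|(\wA_{\text{L}}-A)_{|\mathcal{A}}\|_1 + \|(A-A_0)X\|_{L^2}^2 \leq (3+4/\gamma)\la\|(\wA_{\text{L}}-A)_{|\mathcal{A}}\|_1,
\]
and since $|\mathcal{A}|\leq s_0$ the sum $\|(\wA_{\text{L}}-A)_{|\mathcal{A}}\|_1$ is dominated by the sum over the top $s_0$ coordinates, yielding the required cone condition. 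This is the conceptual crux of the argument.

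Once the cone condition is secured, I would combine Cauchy--Schwarz with the restricted eigenvalue bound from Theorem \ref{RestrEigen} on the event $\mathcal{E}(s_0,3+4/\gamma)$ to write $\|(\wA_{\text{L}}-A)_{|\mathcal{A}}\|_1 \leq \sqrt{\|A\|_0}\,\|\wA_{\text{L}}-A\|_2\leq \sqrt{2\|A\|_0/\mathfrak{k}_\infty}\,\|(\wA_{\text{L}}-A)X\|_{L^2}$. Substituting this into the Lemma \ref{lem2} inequality (and bounding $4\|(\wA_{\text{L}}-A)_{|\mathcal{A}}\|_1-\|\wA_{\text{L}}-A\|_1\leq 3\|(\wA_{\text{L}}-A)_{|\mathcal{A}}\|_1$) gives
\[
\|(\wA_{\text{L}}-A_0)X\|_{L^2}^2 \leq \|(A-A_0)X\|_{L^2}^2 + 3\la\sqrt{2\|A\|_0/\mathfrak{k}_\infty}\,\|(\wA_{\text{L}}-A)X\|_{L^2}.
\]
Then the triangle inequality $\|(\wA_{\text{L}}-A)X\|_{L^2}\leq \|(\wA_{\text{L}}-A_0)X\|_{L^2}+\|(A-A_0)X\|_{L^2}$ together with the weighted inequalities $2uv\leq u^2/c + cv^2$ and $(a+b)^2\leq (1+\eta)a^2+(1+1/\eta)b^2$ lets me absorb a fraction of $\|(\wA_{\text{L}}-A_0)X\|_{L^2}^2$ back to the left. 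The only remaining work is bookkeeping: the calibrated choice $\eta=1$ and $c=\gamma/(2(2+\gamma))$ produces exactly the denominator $1-c(1+\eta)=2/(2+\gamma)$, the inflation factor $(1+c(1+1/\eta))/(1-c(1+\eta))=1+\gamma$, and the variance constant $9\|A\|_0\la^2/(2c\mathfrak{k}_\infty(1-c(1+\eta)))=\frac{9(2+\gamma)^2}{2\mathfrak{k}_\infty\gamma}\|A\|_0\la^2$, matching the stated bound after factoring out $(1+\gamma)$. Taking the infimum over $A$ with $\|A\|_0\leq s_0$ completes the proof. I expect the main obstacle is nothing deep but rather tracking the constants through Young's and triangle inequalities so as to reach precisely the numerical constant $9(2+\gamma)^2/(2\mathfrak{k}_\infty\gamma(1+\gamma))$ rather than a suboptimal one.
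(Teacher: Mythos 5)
Your proposal is correct and follows essentially the same route as the paper's proof: the basic inequality from Lemma \ref{lem2}, the dichotomy on whether $4\la\|(\wA_{\text{L}}-A)_{|\mathcal{A}}\|_1$ exceeds $\gamma\|(A-A_0)X\|_{L^2}^2$, cone membership plus the restricted eigenvalue bound in the hard case, and the same Young-inequality bookkeeping leading to the constant $9(2+\gamma)^2/(2\mathfrak{k}_\infty\gamma)$. If anything, your explicit derivation that $\wA_{\text{L}}-A\in\ca(s_0,3+4/\gamma)$ is cleaner than the paper's, which states this cone condition for $\wA_{\text{L}}-A_0$ (an apparent typo, since the restricted eigenvalue property is then applied to $\wA_{\text{L}}-A$).
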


\begin{proof}
Consider an arbitrary matrix $A\in \R^{d\times d}$ with $\|A\|_0\leq s_0$. Then, on  $\mathcal{E}(s_0,3+4/ \gamma)$, according to Lemma~\ref{lem2}
and Cauchy-Schwarz inequality:
\begin{align} \label{usein}
\|(\wA_{\text{L}}-A_0)X\|_{L^2}^2 + \la \|\wA_{\text{L}}-A\|_1 & \leq \|(A-A_0)X\|_{L^2}^2 + 4\la  \|\wA_{\text{L}|\mathcal{A}}-A\|_1 \nonumber\\
&\leq \|(A-A_0)X\|_{L^2}^2 + 4\la \sqrt{\|A\|_0} \|\wA_{\text{L}|\mathcal{A}}-A\|_2. 
\end{align}
Now, if $4\la  \|\wA_{\text{L}|\mathcal{A}}-A\|_1 \leq \ga \|(A-A_0)X\|_{L^2}^2$ the result immediately follows from  Lemma~\ref{lem2}. Hence, we only
need to treat the case $4\la  \|\wA_{\text{L}|\mathcal{A}}-A\|_1 > \ga \|(A-A_0)X\|_{L^2}^2$. The latter implies that  
$\wA_{\text{L}}-A_0 \in \ca(s_0,3+4/\ga)$ due to  \eqref{usein}. Then, on the event $\mathcal{E}(s_0,3+4/ \gamma)$, we have
\begin{gather*}
\|\wA_{\text{L}|\mathcal{A}}-A\|_2^2 \leq \|\wA_{\text{L}}-A\|_2^2 \leq \frac{2}{\mathfrak{k}_\infty} \|(\wA_{\text{L}}-A)X\|_{L^2}^2 
\end{gather*}
and consequently we obtain from \eqref{usein} that 
\begin{gather*}
\|(\wA_{\text{L}}-A_0)X\|_{L^2}^2
\leq 
\|(A-A_0)X\|_{L^2}^2 +  3\lambda \sqrt{\frac{2\|A\|_0}{\mathfrak{k}_\infty}}\|(\wA_{\text{L}}-A)X\|_{L^2}^2\\
\leq \|(A-A_0)X\|_{L^2}^2 +  3\lambda \sqrt{\frac{2\|A\|_0}{\mathfrak{k}_\infty}}\big( \|(\wA_{\text{L}}-A_0)X\|_{L^2}^2 + \|(A-A_0)X\|_{L^2}^2 \big).
\end{gather*}
Using the inequality $2xy\leq ax^2 +y^2/a$ for $a>0$, we then conclude that 
$$
\|(\wA_{\text{L}}-A_0)X\|_{L^2}^2
\leq (1+\gamma)\|(A-A_0)X\|_{L^2}^2+\frac{9(2+\gamma)^2}{2\mathfrak{k}_\infty\gamma(1+\gamma)} \|A\|_0 \lambda^2,  
$$
which completes the proof. 
\end{proof}

Theorem \ref{Lassothm} enables us to find upper bounds on the various norms of $\wA_{\text{L}}-A_0$ as well as on the sparsity of $\wA_{\text{L}}$.
We remark that the bound in \eqref{LassoineqM} will be useful to provide the connection between the Lasso and Dantzig estimators in the next subsection.  

\begin{cor} \label{corLasso}
Fix $\epsilon_0 \in (0,1)$. Consider the Lasso estimator $\wA_{\text{L}}$ defined in \eqref{lasso} and assume that conditions 
\eqref{constraint} and (H) hold. Then for
\bee
\la \geq 2 \sqrt{\big(2\mathfrak{m}_\infty+\mathfrak{k}_\infty \big) \frac{\ln{(2d^2/\epsilon_0)}}{T}}
\eee
and $T\geq T_0\big(\epsilon_0/2,s_0,3 \big)$, with probability at least $1-\epsilon_0$, it holds that
\begin{align} \label{LassoineqL2}
\|(\wA_{\text{L}}-A_0)X\|_{L^2}^2 & \leq \frac{18}{\mathfrak{k}_\infty } s_0 \lambda^2 \\[1.0 ex]
\label{LassoineqF}
\|\wA_{\text{L}}-A_0\|_{2}^2 &\leq \frac{36}{\mathfrak{k}_\infty^2 } s_0 \lambda^2 \\[1.0 ex]
\label{Lassoineq1}
\|\wA_{\text{L}}-A_0\|_{1} &\leq \frac{24}{\mathfrak{k}_\infty} s_0 \lambda \\[1.0 ex]
\label{LassoineqM}
\|\wA_{\text{L}}\|_{0} &\leq \Big( 48 \frac{\mathfrak{M}_\infty}{\mathfrak{k}_\infty}+72 \Big) s_0. 
\end{align}
\end{cor}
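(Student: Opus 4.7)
The plan is to work on the event $\mathcal{E}(s_0,3)$ provided by Corollary~\ref{cor2}, which has probability at least $1-\epsilon_0$ under the hypotheses, and to derive the four inequalities in sequence by combining Lemma~\ref{lem2}, Cauchy-Schwarz, and the restricted eigenvalue bound built into $\mathcal{E}(s_0,3)$.

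For the prediction bound \eqref{LassoineqL2}, I would apply Lemma~\ref{lem2} with $A=A_0$ (admissible since $\|A_0\|_0\leq s_0$), write $\mathcal{A}_0:=\text{supp}(A_0)$, and drop the non-negative remainder to obtain
\[
\|(\wA_{\text{L}}-A_0)X\|_{L^2}^2\leq 3\lambda\|(\wA_{\text{L}}-A_0)_{|\mathcal{A}_0}\|_1\leq 3\lambda\sqrt{s_0}\,\|\wA_{\text{L}}-A_0\|_2,
\]
the last step by Cauchy-Schwarz applied to the $s_0$-sparse matrix $(\wA_{\text{L}}-A_0)_{|\mathcal{A}_0}$. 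Since the same lemma also guarantees $\wA_{\text{L}}-A_0\in\mathcal{C}(s_0,3)$, the restricted eigenvalue bound gives $\|\wA_{\text{L}}-A_0\|_2\leq\sqrt{2/\mathfrak{k}_\infty}\,\|(\wA_{\text{L}}-A_0)X\|_{L^2}$, and cancelling one factor of $\|(\wA_{\text{L}}-A_0)X\|_{L^2}$ yields \eqref{LassoineqL2}. The Frobenius bound \eqref{LassoineqF} is then immediate from \eqref{LassoineqL2} and one more application of the restricted eigenvalue bound, while \eqref{Lassoineq1} follows by returning to Lemma~\ref{lem2} at $A=A_0$, applying Cauchy-Schwarz in the form $\|\wA_{\text{L}}-A_0\|_1\leq 4\sqrt{s_0}\,\|\wA_{\text{L}}-A_0\|_2$, and inserting \eqref{LassoineqF}.

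The sparsity bound \eqref{LassoineqM} is the most delicate step. The starting point is the first-order optimality condition for \eqref{lasso}: there exists $G\in\partial\|\wA_{\text{L}}\|_1$ with $\wA_{\text{L}}\wC_T+\ep_T-A_0\wC_T=-\lambda G$ and $|G_{ij}|=1$ for every $(i,j)$ in $S:=\text{supp}(\wA_{\text{L}})$. Combined with $\|\ep_T\|_\infty\leq\lambda/2$ on $\mathcal{E}(s_0,3)$, this forces the entrywise lower bound $|((\wA_{\text{L}}-A_0)\wC_T)_{ij}|\geq\lambda/2$ for each $(i,j)\in S$, so summing yields
\[
\frac{\lambda^2}{4}\|\wA_{\text{L}}\|_0\leq\|(\wA_{\text{L}}-A_0)\wC_T\|_2^2.
\]
The main obstacle is then to upper-bound the Frobenius norm on the right by $s_0\lambda^2$ times model constants. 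Using the operator inequality $\wC_T^2\preceq\|\wC_T\|_{\text{op}}\wC_T$ in the trace identity gives $\|M\wC_T\|_2^2\leq\|\wC_T\|_{\text{op}}\|MX\|_{L^2}^2$, and inserting \eqref{LassoineqL2} reduces matters to controlling $\|\wC_T\|_{\text{op}}$ on $\mathcal{E}(s_0,3)$; to recover the precise additive form $(48\mathfrak{M}_\infty/\mathfrak{k}_\infty+72)s_0$, I expect the split $S=(S\cap\mathcal{A}_0)\cup(S\setminus\mathcal{A}_0)$, estimating the first piece trivially by $s_0$ (which becomes the $72s_0$ contribution after the prediction-error step) and exploiting the bound $\|\wC_T\|_\infty\leq\mathfrak{M}_\infty+\frac{3}{2}\mathfrak{k}_\infty$ available on $\mathcal{E}(s_0,3)$ for the second piece to extract the factor $48\mathfrak{M}_\infty/\mathfrak{k}_\infty$.
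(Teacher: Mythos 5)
Your derivation of \eqref{LassoineqL2}--\eqref{Lassoineq1} is correct and coincides with the paper's: apply Lemma~\ref{lem2} at $A=A_0$, use $\|\de\|_1\geq\|\de_{|\mathcal A_0}\|_1$ to pass to the factor $3\la$, then Cauchy--Schwarz on the $s_0$-sparse restriction and the restricted eigenvalue bound on $\ca(s_0,3)$, cancel one power of the prediction error, and propagate to the Frobenius and $l_1$ norms via the cone condition. No issues there.

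The sparsity bound \eqref{LassoineqM} has a genuine gap. Your entrywise lower bound $|((\wA_{\text{L}}-A_0)\wC_T)_{ij}|\geq\la/2$ on $\text{supp}(\wA_{\text{L}})$ is exactly the paper's starting point, but squaring and summing puts you in the Frobenius norm, and the upper bound you then need, $\|M\wC_T\|_2^2\leq\|\wC_T\|_{\text{op}}\|MX\|_{L^2}^2$, requires control of the \emph{operator} norm of $\wC_T$. The event $\mathcal E(s_0,3)$ only bounds the entrywise maximum $\|\wC_T\|_\infty$ (and the restricted quadratic form over the cone), and $\|\wC_T\|_{\text{op}}$ can exceed $\|\wC_T\|_\infty$ by a factor of order $d$; following your route literally produces a bound of order $d\,s_0$ rather than $s_0$. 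The proposed repair by splitting $S=(S\cap\mathcal A_0)\cup(S\setminus\mathcal A_0)$ does not address this, since the obstruction is the norm of $\wC_T$, not which entries of $S$ are being counted, and the constants $72$ and $48\mathfrak M_\infty/\mathfrak k_\infty$ are asserted rather than derived. The paper avoids the problem by staying in $l_1$ throughout: summing the same entrywise bound gives
\begin{equation}
\|\wA_{\text{L}}\|_0\,\frac{\la}{2}\;\leq\;\|(\wA_{\text{L}}-A_0)\wC_T\|_1\;\leq\;\|\wC_T\|_\infty\,\|\wA_{\text{L}}-A_0\|_1\;\leq\;\Big(\mathfrak M_\infty+\tfrac{3\mathfrak k_\infty}{2}\Big)\,\frac{24}{\mathfrak k_\infty}\,s_0\la,
\end{equation}
where the last step inserts \eqref{Lassoineq1}, and dividing by $\la/2$ yields $(48\mathfrak M_\infty/\mathfrak k_\infty+72)s_0$ directly. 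In short: keep your first-order-condition lower bound, but pair it with the $l_1$ error bound you have already proved instead of the prediction error bound.
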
 

\begin{proof}
On the event $\mathcal{E}(s_0,3)$, taking $A=A_0$ and $\mathcal{A}_0=\text{supp}(A_0)$, we obtain the inequality 
\begin{gather*}
\|(\wA_{\text{L}}-A_0)X\|_{L^2}^2 + \la \|\wA_{\text{L}}-A\|_1
\leq 4\la \|\wA_{\text{L}|\mathcal{A}}-A\|_1
\end{gather*}
due to Lemma \ref{lem2}. Since on $\mathcal{E}(s_0,3)$ we have $\wA_{\text{L}}-A_0\in\mathcal{C}(s_0,3)$, we conclude that 
\begin{gather*}
\|(\wA_{\text{L}}-A_0)X\|_{L^2}^2 \leq 
3 \la \|\wA_{\text{L}|\mathcal{A}}-A\|_1 \\
 \leq 3 \lambda \sqrt{s_0} \|\wA_{\text{L}|\mathcal{A}}-A\|_2 \leq 3\lambda \sqrt{\frac{2s_0}{\mathfrak{k}_\infty}}\|(\wA_{\text{L}}-A_0)X\|_{L^2}^2
.\end{gather*}
This gives \eqref{LassoineqL2} and \eqref{LassoineqF}. Moreover, on the same event it holds 
$$
\|\wA_{\text{L}|\mathcal{A}}-A\|_1 \leq 4 \sqrt{s_0} \|\wA_{\text{L}|\mathcal{A}}-A\|_2
$$
and hence \eqref{Lassoineq1} follows. 

Now, it remains to prove \eqref{LassoineqM}. Note that necessary and sufficient condition for $\wA_{\text{L}}$ to be the solution of the optimisation problem \eqref{lasso} is the existence of a matrix $B \in \partial \|\wA_{\text{L}}\|_1$ such that 
$$
\ep_T + \big( \wA_{\text{L}} - A_0 \big) \wC_T +\lambda B = 0.  
$$
Furthermore, $\wA_{\text{L}}^{ij}\neq 0$ implies that $B^{ij}=\text{sign} (\wA_{\text{L}}^{ij}) $. Thus, we conclude that 
\begin{gather*}
\|(\wA_{\text{L}}-A_0)\wC_T\|_{1}=\|\lambda B +\ep_T\|_1 = \sum_{i,j=1}^d \left|\lambda B^{ij}+\ep_T^{ij} \right| \\
\geq \sum_{i,j: \wA_{\text{L}}^{ij}\neq 0} \left |\lambda B^{ij}+\ep_T^{ij}\right| 
\geq \sum_{i,j: \wA_{\text{L}}^{ij}\neq 0} \left |\lambda -|\ep_T^{ij}| \right|
\geq \|\wA_{\text{L}}\|_{0}\frac{\lambda}{2}, 
\end{gather*}
where the last inequality holds on $\mathcal{E}(s_0,3)$. On the other hand, on the same event we obtain
\begin{gather*}
\|(\wA_{\text{L}}-A_0)\wC_T\|_{1} \leq \|\wC_T\|_\infty \|\wA_{\text{L}}-A_0\|_{1} \leq \left( \mathfrak{M}_\infty + \frac{3 \mathfrak{k}_\infty}{2}\right)\frac{24}{\mathfrak{k}_\infty} s_0 \lambda, 
\end{gather*}
which implies \eqref{LassoineqM}. 
\end{proof}

The upper bounds in  \eqref{LassoineqL2}-\eqref{Lassoineq1} improve the bounds obtained in \cite[Corollary 1]{GM19} and they are in line with the classical results for linear regression models. We recall that the paper \cite{GM19} considers row sparsity of the unknown parameter 
$A_0$, i.e.
\[
\|A_0^{i}\|_0 \leq \mathfrak{s} \qquad \text{for all } 1\leq i \leq d,
\]
where $A_0^{i}$ denotes the $i$th row of $A_0$. Obviously, this constraint corresponds to $s_0=d \mathfrak{s}$ in our setting. The authors 
of  \cite{GM19} obtained the upper bound for  $\|\wA_{\text{L}}-A_0\|_{2}^2$ of order
\[
\frac{d \mathfrak{s} (\log d+\log\log T)}{T}
\]
in contrast to our improved bound $T^{-1}d \mathfrak{s} \log d$. Thus, we essentially match the lower bound
\[
\inf_{\wA} \sup_{A:~\max_i\|A_0^{i}\|_0 \leq \mathfrak{s}} \E[\|\wA-A\|_{2}^2] \geq \frac{c_1  d \mathfrak{s} \log (c_2 d/\mathfrak{s})}{T}  
\qquad \text{for some } c_1,c_2>0,
\]
which has been derived in \cite[Theorem 2]{GM19}.

The authors of \cite{GM19} have introduced the adaptive Lasso estimator, which is defined as   
\[
\wA_{\text{ad}}:= \argmin_{A\in \R^{d\times d}} \left( \lll_T(A) + \la \|A\circ |\wA_{\text{ML}}|^{-\gamma} \|_1\right),
\] 
where $\circ$ denotes the Hadamard product and $(|\wA_{\text{ML}}|^{-\gamma})_{ij}:=|\wA_{\text{ML}}^{ij}|^{-\ga}$ for a $\ga>0$. They have proved that the adaptive estimator $\wA_{\text{ad}}$  is consistent for support selection and showed the asymptotic normality of $\wA_{\text{ad}}$ when restricted to the elements in $\text{supp}(A_0)$; see \cite[Theorem 4]{GM19}.

\subsection{Properties of the Dantzig estimator} \label{sec4.2}

In this subsection we will establish a connection between the prediction errors associated with the Lasso and Dantzig estimators. This step is essential for the derivation of error bounds for $\wA_{\text{D}}$. Our results are an extension of the study in \cite{BRT09}, where it was shown that under sparsity conditions, the Lasso and the Dantizg estimators show similar behaviour for linear regression and for nonparametric regression models, for $l_2$ prediction loss and for $l_p$ loss in the coefficients for $1 \leq p \leq 2.$ 

In what follows, we will derive analogous bounds for the Ornstein-Uhlenbeck process.
\begin{prop} \label{propDantzig}
Consider the Lasso estimator $\wA_{\text{D}}$ defined in \eqref{Dantzig}  and assume that condition (H) holds. 
\begin{itemize}
\item[(i)] Define $\delta_D(A):=A-\wA_{\text{D}}$ and $\mathcal{A}:=\text{supp}(A)$, and assume that $A$ satisfies the Dantzig constraint \eqref{Lassoconstr}. Then it holds that 
$$
\|\delta_D(A)_{|\mathcal{A}^c}\|_1 \leq \|\delta_D(A)_{|\mathcal{A}}\|_1 .
$$
\item[(ii)]  On the event $\big\{ \|\wA_{\text{L}}\|_0 \leq s \big\} \cap \mathcal{E}(s,1)$ the following inequality holds: 
\bee
\Big| \|(\wA_{\text{L}}-A_0)X\|_{L^2}^2 -\|(\wA_{\text{D}}-A_0)X\|_{L^2}^2 \Big| \leq  \frac{18 }{\mathfrak{k}_\infty} \|\wA_{\text{L}}\|_0\lambda^2. 
\eee
\end{itemize}
\end{prop}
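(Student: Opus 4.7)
\emph{Part (i)} follows from the optimality of $\wA_{\text{D}}$. Since $A$ is feasible for the Dantzig program, $\|\wA_{\text{D}}\|_1\le\|A\|_1$ by the defining property. Decomposing both $l_1$-norms over $\mathcal{A}$ and $\mathcal{A}^c$ and using $A_{|\mathcal{A}^c}=0$ gives $\|\wA_{\text{D}|\mathcal{A}}\|_1+\|\wA_{\text{D}|\mathcal{A}^c}\|_1\le\|A_{|\mathcal{A}}\|_1$. The reverse triangle inequality applied on $\mathcal{A}$ yields $\|A_{|\mathcal{A}}\|_1-\|\wA_{\text{D}|\mathcal{A}}\|_1\le\|\delta_D(A)_{|\mathcal{A}}\|_1$; combined with $\|\wA_{\text{D}|\mathcal{A}^c}\|_1=\|\delta_D(A)_{|\mathcal{A}^c}\|_1$ this proves the claim.

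\emph{Part (ii).} Write $E_1:=\|(\wA_{\text{L}}-A_0)X\|_{L^2}^2-\|(\wA_{\text{D}}-A_0)X\|_{L^2}^2$ and $D:=\|(\wA_{\text{L}}-\wA_{\text{D}})X\|_{L^2}$. The strategy is to bound $E_1$ and $-E_1$ separately in terms of $\|\wA_{\text{L}}-\wA_{\text{D}}\|_1$ and $D$, and then to convert both estimates into multiples of $\|\wA_{\text{L}}\|_0\lambda^2/\mathfrak{k}_\infty$ via part (i) and the restricted eigenvalue property inside $\mathcal{E}(s,1)$. First I apply Lemma \ref{lem1} with $A=\wA_{\text{D}}$. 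The Lasso estimator is Dantzig-feasible (its subdifferential identity gives $\|(\wA_{\text{L}}-A_0)\wC_T+\ep_T\|_\infty\le\lambda$), so the definition of $\wA_{\text{D}}$ forces $\|\wA_{\text{D}}\|_1\le\|\wA_{\text{L}}\|_1$, making the penalty-difference term in Lemma \ref{lem1} non-positive. Combined with $|\lan\ep_T,V\ran_{\text{F}}|\le\tfrac{\lambda}{2}\|V\|_1$ on $\mathcal{E}(s,1)$, this yields
\[
E_1\le\lambda\|\wA_{\text{L}}-\wA_{\text{D}}\|_1-D^2.
\]
Part (i) applied with $A=\wA_{\text{L}}$ places $\wA_{\text{L}}-\wA_{\text{D}}\in\mathcal{C}(\|\wA_{\text{L}}\|_0,1)\subset\mathcal{C}(s,1)$ on $\{\|\wA_{\text{L}}\|_0\le s\}$, and the restricted eigenvalue then gives $\|\wA_{\text{L}}-\wA_{\text{D}}\|_1\le 2\sqrt{2\|\wA_{\text{L}}\|_0/\mathfrak{k}_\infty}\,D$. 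Maximising $2\lambda\sqrt{2\|\wA_{\text{L}}\|_0/\mathfrak{k}_\infty}\,D-D^2$ in $D$ bounds $E_1$ by $2\|\wA_{\text{L}}\|_0\lambda^2/\mathfrak{k}_\infty$, which is already of the desired order.

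For $-E_1$ I exploit Dantzig feasibility of both estimators: $\|(\wA_{\text{L}}-\wA_{\text{D}})\wC_T\|_\infty\le 2\lambda$ forces $D^2\le 2\lambda\|\wA_{\text{L}}-\wA_{\text{D}}\|_1$. A second-order Taylor expansion of $\lll_T$ around $\wA_{\text{D}}$, together with $\lll_T(\wA_{\text{L}})\le\lll_T(\wA_{\text{D}})$ (which follows from Lasso optimality and $\|\wA_{\text{D}}\|_1\le\|\wA_{\text{L}}\|_1$) and the $\ep_T$ bound, produces
\[
-E_1\le 3\lambda\|\wA_{\text{L}}-\wA_{\text{D}}\|_1+D^2.
\]
Substituting the restricted-eigenvalue bound on $\|\wA_{\text{L}}-\wA_{\text{D}}\|_1$ and the resulting bound $D^2=O(\|\wA_{\text{L}}\|_0\lambda^2/\mathfrak{k}_\infty)$ yields the matching upper bound on $-E_1$. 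The main obstacle is precisely this lower bound on $E_1$: one must simultaneously use the subgradient of $\|\cdot\|_1$ at $\wA_{\text{L}}$ (via the Taylor expansion) and the $l_\infty$ feasibility of $\wA_{\text{D}}$; locking in the sharp constant $18$ further requires a careful AM--GM balancing of the resulting bilinear estimates, but the scheme above already delivers the correct order in $\|\wA_{\text{L}}\|_0$, $\lambda$ and $\mathfrak{k}_\infty$.
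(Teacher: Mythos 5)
Part (i) is the paper's argument verbatim, and the overall architecture of your part (ii) --- expand the quadratic loss in $\de:=\wA_{\text{L}}-\wA_{\text{D}}$, bound the linear terms through Dantzig feasibility of both estimators and the $\ep_T$ event, place $\de$ in the cone $\ca(s,1)$ via part (i) applied to $A=\wA_{\text{L}}$, and invoke the restricted eigenvalue bound --- is exactly the paper's (Section \ref{sec6.4}). Your bound for $E_1$ via Lemma \ref{lem1} together with $\|\wA_{\text{D}}\|_1\le\|\wA_{\text{L}}\|_1$ is correct and even yields the better constant $2/\mathfrak{k}_\infty$ for that direction.

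The gap is in the other direction. Your intermediate inequality $-E_1\le 3\la\|\de\|_1+D^2$ carries the quadratic term with the wrong sign, and with that sign the constant $18$ is genuinely out of reach: feeding in $D^2\le 2\la\|\de\|_1$ and $\|\de\|_1\le 2\sqrt{2\|\wA_{\text{L}}\|_0/\mathfrak{k}_\infty}\,D$ gives at best something like $-E_1\le 80\,\|\wA_{\text{L}}\|_0\la^2/\mathfrak{k}_\infty$, so the inequality as stated is not proved by the scheme you describe --- and you acknowledge as much. The repair is that the exact second-order identity you appeal to already delivers the favourable sign: with $f(A):=\|(A-A_0)X\|_{L^2}^2$,
\[
-E_1=f(\wA_{\text{D}})-f(\wA_{\text{L}})=-2\,\text{tr}\bigl((\wA_{\text{D}}\wC_T+\ep_T-A_0\wC_T)\de^{\top}\bigr)+2\,\text{tr}\bigl(\ep_T\de^{\top}\bigr)-\text{tr}\bigl(\de\wC_T\de^{\top}\bigr)\le 3\la\|\de\|_1-D^2
\]
on the stated event, since the quadratic term $\text{tr}(\de\wC_T\de^{\top})=D^2$ enters with a minus sign. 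Then the same completion of squares you used for $E_1$, now with coefficient $3\la$, gives $-E_1\le\max_{D\ge0}\bigl(6\la\sqrt{2\|\wA_{\text{L}}\|_0/\mathfrak{k}_\infty}\,D-D^2\bigr)=18\|\wA_{\text{L}}\|_0\la^2/\mathfrak{k}_\infty$; neither the auxiliary bound $D^2\le2\la\|\de\|_1$ nor $\lll_T(\wA_{\text{L}})\le\lll_T(\wA_{\text{D}})$ is then needed. This is precisely how the paper extracts the constant $18$ from the two identities in \eqref{equivfirstineq}.
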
 

\begin{proof} See Section \ref{sec6.4}.
\end{proof}

Proposition \ref{propDantzig} implies an oracle inequality for the Dantzig estimator, which is formulated in the next theorem.  

\begin{theo} \label{Dantzigthm}
Fix $\gamma>0$ and $\epsilon_0 \in (0,1)$. Consider the Dantzig estimator $\wA_{\text{D}}$ defined in~\eqref{Dantzig} and assume that conditions
\eqref{constraint} and   (H) hold. Then for
\bee
\lambda \geq 2 \sqrt{\big(2\mathfrak{m}_\infty+\mathfrak{k}_\infty \big) \frac{\ln{(2d^2/\epsilon_0)}}{T}}
\eee
and $T\geq T_0\big( \epsilon_0/2, ( 48 \frac{\mathfrak{M}_\infty}{\mathfrak{k}_\infty}+72)s_0,3+4/ \ga \big)$, with probability at least $1-\epsilon_0$, it holds that
\bee \label{dantzigoracle}
\|(\wA_{\text{D}}-A_0)X\|_{L^2}^2\leq (1+\gamma) \inf_{A: \|A\|_0 = s_0} \left\{ \|(A-A_0)X\|_{L^2}^2 + C_D(\gamma) s_0 \lambda^2 \right\},
\eee
where 
$$
C_D(\gamma)= \frac{18}{\mathfrak{k}_\infty} \Big(\frac{(\gamma+2)^2}{4 \gamma} + 48 \frac{\mathfrak{M}_\infty}{ \mathfrak{k}_\infty}+72 \Big) .
$$
\end{theo}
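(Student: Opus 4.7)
The plan is to bootstrap the Dantzig oracle inequality from the Lasso oracle inequality (Theorem \ref{Lassothm}) using the transfer bound in Proposition \ref{propDantzig}(ii). Set the shorthand $\Bar{s}:=(48\mathfrak{M}_\infty/\mathfrak{k}_\infty+72)\,s_0$, so that Corollary \ref{corLasso}\eqref{LassoineqM} supplies the crucial control $\|\wA_{\text{L}}\|_0\leq\Bar{s}$. Since $\Bar{s}\geq s_0$ and $3+4/\gamma\geq 3>1$, the event $\mathcal{E}(\Bar{s},3+4/\gamma)$ is contained in each of $\mathcal{E}(s_0,3)$, $\mathcal{E}(s_0,3+4/\gamma)$ and $\mathcal{E}(\Bar{s},1)$ by the monotonicity $\mathcal{C}(s,c_0)\supseteq\mathcal{C}(s',c_0')$ for $s\geq s'$, $c_0\geq c_0'$. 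Hence Corollary \ref{cor2} applied with $(\Bar{s},3+4/\gamma)$ guarantees that all three events hold simultaneously with probability at least $1-\epsilon_0$ under the prescribed lower bound on $T$.

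On this good event I would first invoke Theorem \ref{Lassothm}: for every $A$ with $\|A\|_0\leq s_0$,
\begin{equation*}
\|(\wA_{\text{L}}-A_0)X\|_{L^2}^2\leq(1+\gamma)\Big\{\|(A-A_0)X\|_{L^2}^2+\tfrac{9(2+\gamma)^2}{2\mathfrak{k}_\infty\gamma(1+\gamma)}\,s_0\lambda^2\Big\}.
\end{equation*}
Then I apply Proposition \ref{propDantzig}(ii) with $s=\Bar{s}$, which is legitimate because $\|\wA_{\text{L}}\|_0\leq\Bar{s}$ by Corollary \ref{corLasso}\eqref{LassoineqM} and because $\mathcal{E}(\Bar{s},1)$ holds: this yields
\begin{equation*}
\|(\wA_{\text{D}}-A_0)X\|_{L^2}^2\leq\|(\wA_{\text{L}}-A_0)X\|_{L^2}^2+\tfrac{18}{\mathfrak{k}_\infty}\,\Bar{s}\,\lambda^2.
\end{equation*}

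Adding the two bounds gives
\begin{equation*}
\|(\wA_{\text{D}}-A_0)X\|_{L^2}^2\leq(1+\gamma)\|(A-A_0)X\|_{L^2}^2+\Big(\tfrac{9(2+\gamma)^2}{2\mathfrak{k}_\infty\gamma}+\tfrac{18}{\mathfrak{k}_\infty}\Bar{s}/s_0\Big)s_0\lambda^2,
\end{equation*}
and I would now check that the bracketed coefficient is bounded by $(1+\gamma)C_D(\gamma)$. Using $(1+\gamma)\geq 1$ term by term,
\begin{equation*}
(1+\gamma)C_D(\gamma)=\tfrac{18(1+\gamma)}{\mathfrak{k}_\infty}\Big(\tfrac{(\gamma+2)^2}{4\gamma}+48\tfrac{\mathfrak{M}_\infty}{\mathfrak{k}_\infty}+72\Big)\geq\tfrac{9(2+\gamma)^2}{2\gamma\mathfrak{k}_\infty}+\tfrac{18}{\mathfrak{k}_\infty}\Big(48\tfrac{\mathfrak{M}_\infty}{\mathfrak{k}_\infty}+72\Big),
\end{equation*}
which absorbs the additive term coming from the Lasso--Dantzig transfer. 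Taking the infimum over admissible $A$ yields \eqref{dantzigoracle}.

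The routine steps are the arithmetic recombination of the constants and the choice of parameters in the application of Corollary \ref{cor2}; these present no real difficulty. The main conceptual step is the one already carried out in Proposition \ref{propDantzig}(ii) — namely, identifying the right $s$ with which to apply the restricted eigenvalue property when comparing $\wA_{\text{L}}$ and $\wA_{\text{D}}$. Here that step is isolated into Proposition \ref{propDantzig}, so the present proof reduces to a clean book-keeping argument, and the only subtle point is the verification that the single event $\mathcal{E}(\Bar{s},3+4/\gamma)$ governs all the inequalities we need.
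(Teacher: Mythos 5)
Your proposal is correct and follows essentially the same route as the paper's proof: invoke Theorem \ref{Lassothm} for the Lasso oracle bound, transfer to the Dantzig estimator via Proposition \ref{propDantzig}(ii) with $s=(48\mathfrak{M}_\infty/\mathfrak{k}_\infty+72)s_0$ using the sparsity bound \eqref{LassoineqM}, and combine the constants. Your explicit verification of the event inclusions via the monotonicity of $\ca(s,c_0)$ in $(s,c_0)$ is a point the paper leaves implicit, but the argument is the same.
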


\begin{proof}
Consider matrix $A\in \R^{d\times d}$ such that $\|A\|_0=s_0$. Then, on the event $\mathcal{E}(s_1,3+4/\gamma)$, according to Proposition \eqref{propDantzig}
\begin{gather*}
 \|(\wA_{\text{D}}-A_0)X\|_{L^2}^2
 \leq \|(\wA_{\text{L}}-A_0)X\|_{L^2}^2  + \frac{18 }{\mathfrak{k}_\infty} \Big( 48 \frac{\mathfrak{M}_\infty}{\mathfrak{k}_\infty}+72 \Big) s_0 \lambda^2. 
\end{gather*}
On the other hand, due to Theorem \ref{Lassothm}, we deduce that 
\bee
\|(\wA_{\text{L}}-A_0)X\|_{L^2}^2
\leq (1+\gamma)\|(A-A_0)X\|_{L^2}^2+ \frac{9(\gamma+2)^2}{2\mathfrak{k}_\infty \gamma} s_0 \lambda^2. 
\eee
Combining both  inequalities yields \eqref{dantzigoracle}.
\end{proof}

The statements of Theorems   \ref{Lassothm}  and \ref{Dantzigthm} suggest that the Lasso and Dantzig estimators are asymptotically equivalent. This is in line with the theoretical findings in linear regression models as it has been shown in \cite{BRT09}. More specifically, we obtain the following result, which is a direct analogue of  Corollary  \ref{corLasso}. 

\begin{cor} \label{corDantzig} 
Fix $\epsilon_0 \in (0,1)$. Consider the Dantzig estimator $\wA_{\text{D}}$ defined in \eqref{Dantzig} and assume that conditions \eqref{constraint} and  (H) hold. Then for
\bee
\la \geq 2 \sqrt{\big(2\mathfrak{m}_\infty+\mathfrak{k}_\infty \big) \frac{\ln{(2d^2/\epsilon_0)}}{T}}
\eee
and $T\geq T_0\big(\epsilon_0/2,s_0,1 \big)$, with probability at least $1-\epsilon_0$, it holds that
\begin{align} \label{DantzigineqL2}
\|(\wA_{\text{D}}-A_0)X\|_{L^2}^2 & \leq \frac{18}{\mathfrak{k}_\infty } s_0 \lambda^2 \\[1.0 ex]
\label{DantzigineqF}
\|\wA_{\text{D}}-A_0\|_{2}^2 &\leq \frac{36}{\mathfrak{k}_\infty^2 } s_0 \lambda^2 \\[1.0 ex]
\label{Dantzigineq1}
\|\wA_{\text{D}}-A_0\|_{1} &\leq \frac{24}{\mathfrak{k}_\infty} s_0 \lambda. 
\end{align}
\end{cor}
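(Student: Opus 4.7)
The plan is to mirror the Lasso argument of Corollary \ref{corLasso}: work on the event $\mathcal{E}(s_0,1)$ of Corollary \ref{cor2}, show that $\wA_{\text{D}}-A_0$ lies in the restricted cone $\mathcal{C}(s_0,1)$ so that the restricted eigenvalue inequality is available, and then close a self-bounding estimate for $\|(\wA_{\text{D}}-A_0)X\|_{L^2}^2$. The probabilistic step is direct: under the stated conditions on $T$ and $\lambda$, Corollary \ref{cor2} with $c_0=1$ gives $\P(\mathcal{E}(s_0,1))\geq 1-\epsilon_0$, so all subsequent work is deterministic on this event.

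First I would verify that $A_0$ is itself feasible for the Dantzig program \eqref{Dantzig}. Evaluating the constraint at $A=A_0$ reduces to $\|\ep_T\|_\infty \leq \lambda$, which holds on $\mathcal{E}(s_0,1)$ since $\|\ep_T\|_\infty\leq \lambda/2$. With feasibility in hand, Proposition \ref{propDantzig}(i) applied with $A=A_0$ and $\mathcal{A}_0 := \text{supp}(A_0)$ yields $\|(\wA_{\text{D}}-A_0)_{|\mathcal{A}_0^c}\|_1 \leq \|(\wA_{\text{D}}-A_0)_{|\mathcal{A}_0}\|_1$. Since $|\mathcal{A}_0|\leq s_0$ by \eqref{constraint}, this forces $\wA_{\text{D}}-A_0 \in \mathcal{C}(s_0,1)$, so the restricted eigenvalue bound provides
\[
\|\wA_{\text{D}}-A_0\|_2^2 \leq \frac{2}{\mathfrak{k}_\infty}\,\|(\wA_{\text{D}}-A_0)X\|_{L^2}^2.
\]

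The core step will be to control the prediction error using the identity $\|(\wA_{\text{D}}-A_0)X\|_{L^2}^2 = \langle (\wA_{\text{D}}-A_0)\wC_T,\wA_{\text{D}}-A_0\rangle_{\text{F}}$ together with the duality bound $|\langle M,N\rangle_{\text{F}}|\leq \|M\|_\infty\|N\|_1$. The key input is
\[
\|(\wA_{\text{D}}-A_0)\wC_T\|_\infty \leq \|\wA_{\text{D}}\wC_T+\ep_T-A_0\wC_T\|_\infty + \|\ep_T\|_\infty \leq \tfrac{3}{2}\lambda,
\]
which uses the Dantzig constraint for $\wA_{\text{D}}$ together with $\|\ep_T\|_\infty \leq \lambda/2$ on $\mathcal{E}(s_0,1)$. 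Combining this with the cone bound $\|\wA_{\text{D}}-A_0\|_1 \leq 2\|(\wA_{\text{D}}-A_0)_{|\mathcal{A}_0}\|_1 \leq 2\sqrt{s_0}\,\|\wA_{\text{D}}-A_0\|_2$ and the restricted eigenvalue inequality produces a self-bounding estimate of the form $\|(\wA_{\text{D}}-A_0)X\|_{L^2}^2 \leq 3\lambda\sqrt{2s_0/\mathfrak{k}_\infty}\,\|(\wA_{\text{D}}-A_0)X\|_{L^2}$. Solving it yields \eqref{DantzigineqL2}, then \eqref{DantzigineqF} follows by plugging \eqref{DantzigineqL2} into the restricted eigenvalue bound, and \eqref{Dantzigineq1} from the $\ell_1$--$\ell_2$ bound on the support.

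I do not anticipate a serious obstacle, since the argument parallels the Lasso analysis almost step for step, with the first-order optimality condition for $\wA_{\text{L}}$ replaced by the Dantzig feasibility inequality for $\wA_{\text{D}}$. The most delicate point is the estimate $\|(\wA_{\text{D}}-A_0)\wC_T\|_\infty \leq \tfrac{3}{2}\lambda$: one must invoke both the Dantzig constraint for $\wA_{\text{D}}$ and the deviation bound on $\ep_T$ on the event $\mathcal{E}(s_0,1)$, whereas the Lasso proof exploited the KKT equation directly. Constant tracking aside, the rest of the derivation is routine.
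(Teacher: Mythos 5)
Your proposal is correct and follows essentially the same route as the paper's proof: feasibility of $A_0$ on $\mathcal{E}(s_0,1)$, cone membership $\wA_{\text{D}}-A_0\in\mathcal{C}(s_0,1)$ via Proposition \ref{propDantzig}(i), the duality bound $\|(\wA_{\text{D}}-A_0)X\|_{L^2}^2\leq\|(\wA_{\text{D}}-A_0)\wC_T\|_\infty\|\wA_{\text{D}}-A_0\|_1$ with the Dantzig constraint and $\|\ep_T\|_\infty\leq\lambda/2$, and the restricted eigenvalue inequality to close the self-bounding estimate. Your writing of the final chain (with the first power of $\|(\wA_{\text{D}}-A_0)X\|_{L^2}$ on the right-hand side) is in fact the correct reading of the paper's displayed inequality.
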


\begin{proof} 
Denote $\mathcal{A}_0=\text{supp}(A_0)$. On the event $\mathcal{E}(s_0,1)$ the matrix $A_0$ satisfies the Dantzig constraint \eqref{Lassoconstr}, 
$\wA_{\text{D}}-A_0 \in \mathcal{C}(s_0,1)$ and 
\begin{gather*}
\|(\wA_{\text{D}}-A_0)X\|_{L^2}^2 
 \leq \|(\wA_{\text{D}}-A_0)\wC_T\|_\infty \|\wA_{\text{D}}-A_0\|_1 \\
\leq 2 \big(  \|(\wA_{\text{D}}-A_0)\wC_T +\ep_T\|_\infty+ \|\ep_T\|_\infty \big)  \|\wA_{\text{D}|\mathcal{A}_0}-A_0\|_1 \\
 \leq 3 \lambda \sqrt{s_0} \|\wA_{\text{D}}-A_0\|_2 \leq 3\lambda \sqrt{\frac{2s_0}{\mathfrak{k}_\infty}}\|(\wA_{\text{D}}-A_0)X\|_{L^2}^2, 
\end{gather*}
which gives \eqref{DantzigineqL2} and \eqref{DantzigineqF}. Moreover, on the same event it holds that
$$
\|\wA_{\text{D}}-A_0\|_1 \leq 2 \sqrt{s_0} \|\wA_{\text{D}}-A_0\|_2,
$$
which completes the proof. 
\end{proof}

It is noteworthy to mention that even if in our case Lasso and Dantzig selector performances are equivalent, a potential strength of the Dantzig estimator over penalized likelihood methods such as Lasso is that it can be applied to settings in which no explicit
likelihoods or loss functions are available, and may be of interest in both computational and
theoretical context (see \cite{dicker2014} for more details).

\section{Numerical simulations} \label{sec5}
\setcounter{equation}{0}
\renewcommand{\theequation}{\thesection.\arabic{equation}}

This sections presents some numerical experiments on simulated data that illustrate our theoretical results. 

Our estimation methods are based on continuous observations of the the underlying process, which need to be discretised  for numerical simulations. We will use 500000 discretisation points over the time interval $[0,T]$ with $T=300$. Such approximation is sufficient for the illustration purpose, since further refinement of the grid does not lead to a significant improvement.  

In Figure \ref{Figure1} we demonstrate an example of the transition matrix $A_0\in \R^{15\times 15}$ and the corresponding maximum likelihood, Lasso and Dantzig estimators. Instead of giving numerical values of the entries of $A_0$ we use a colour code to highlight the sparsity. We observe
that MLE provides a good performance on the support, but it gives rather poor estimates outside the support. On the other hand, the superiority of the Lasso and Dantzig estimators, especially in terms of support recovery, is quite obvious even for  relatively small dimension of matrix.

Figure \ref{Figure2} demonstrates the relative error of the maximum likelihood, Lasso and Dantzig estimators compared to the norm of the true matrix.
We compute the relative error for dimensions $d=5,\ldots,20$ and for  $L^1$ and Frobenius norms. Figure \ref{Figure2} clearly shows
the improvement of performance of penalized estimation methods with growth of the dimension $d$ compared to the maximum likelihood estimation.
Indeed, we observe  that relative errors of maximum likelihood estimation grow linearly both in $L^1$ and Frobenius norms, while  relative errors of Lasso and Dantzig estimators decay in $d$. 
The sparsity of the true parameter $A_0$ was chosen equal to $s=0.3d^2$, which might explain the limiting behaviour of Lasso and Dantzig estimators
when $d$ is increasing.  Finally, we observe that relative errors for Lasso and Dantzig estimators are practically equivalent, which is exactly in accordance with our theoretical results.

\begin{figure} 
\centering
\subfloat[Transition matrix $A_0$]{
  \includegraphics[width=60mm]{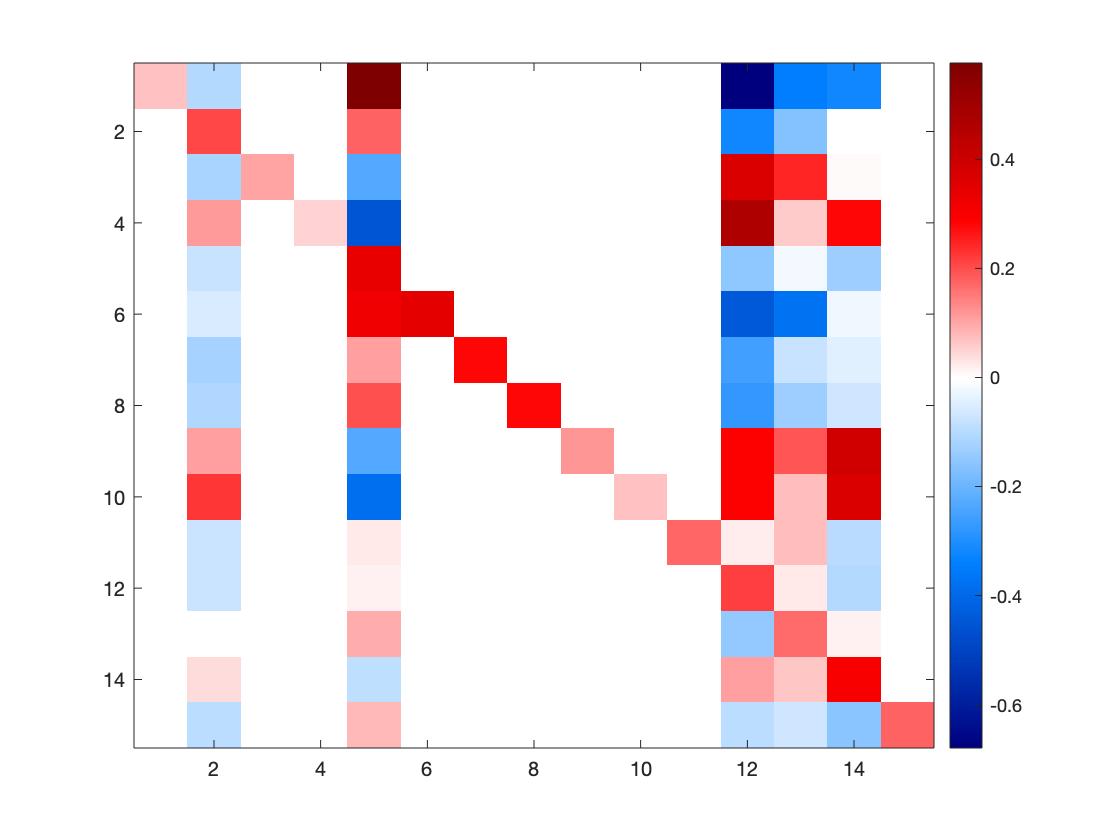}
}
\subfloat[MLE]{
  \includegraphics[width=60mm]{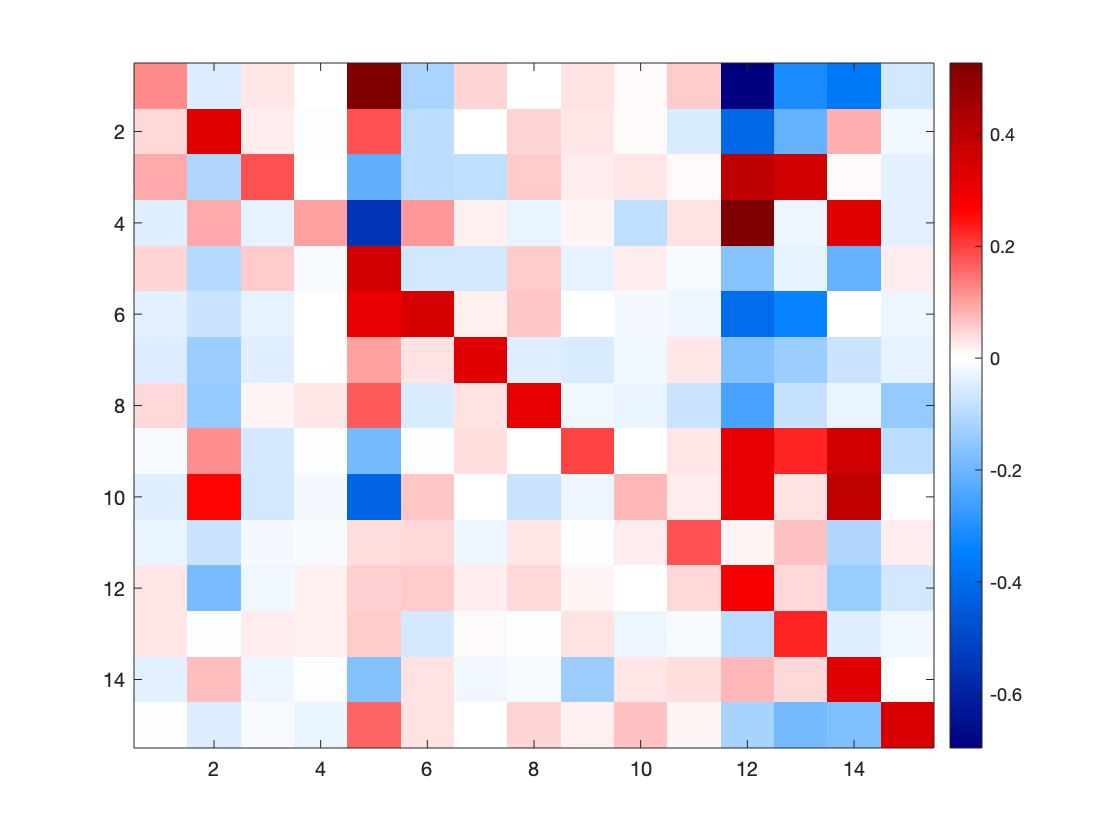}
}
\hspace{6pt}
\subfloat[Lasso]{
  \includegraphics[width=60mm]{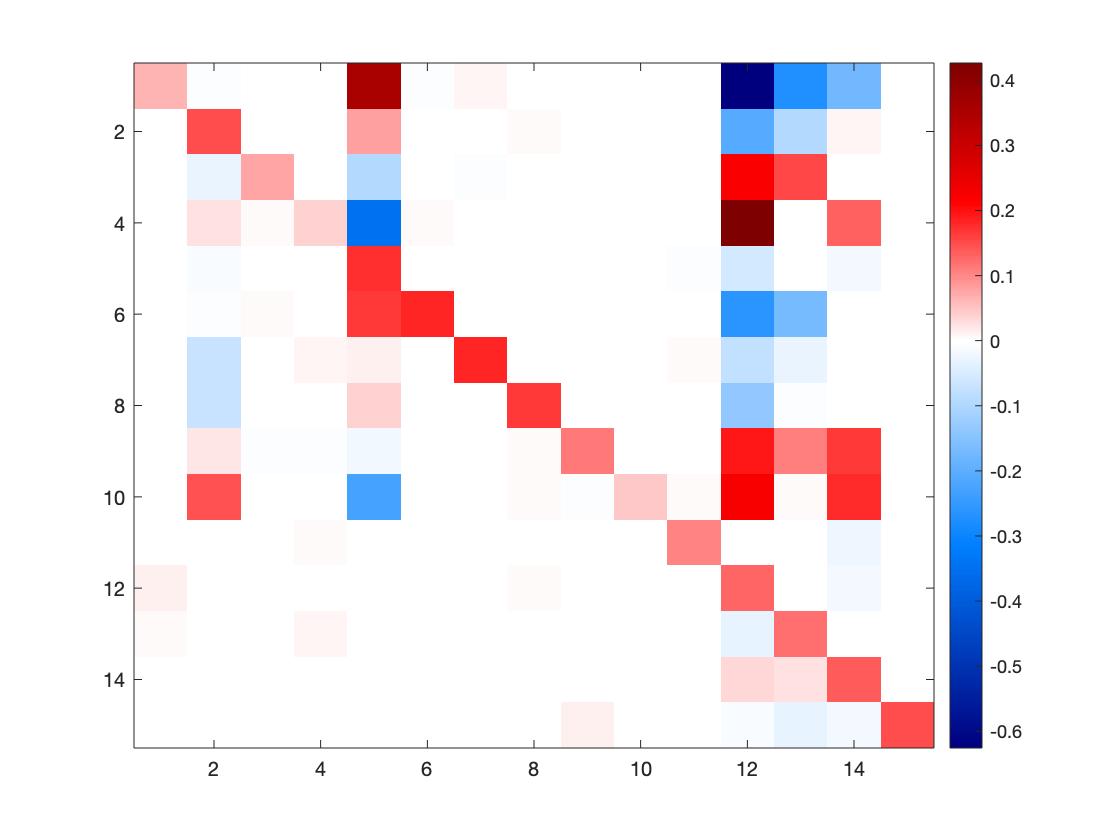}
}
\subfloat[Dantzig]{
  \includegraphics[width=60mm]{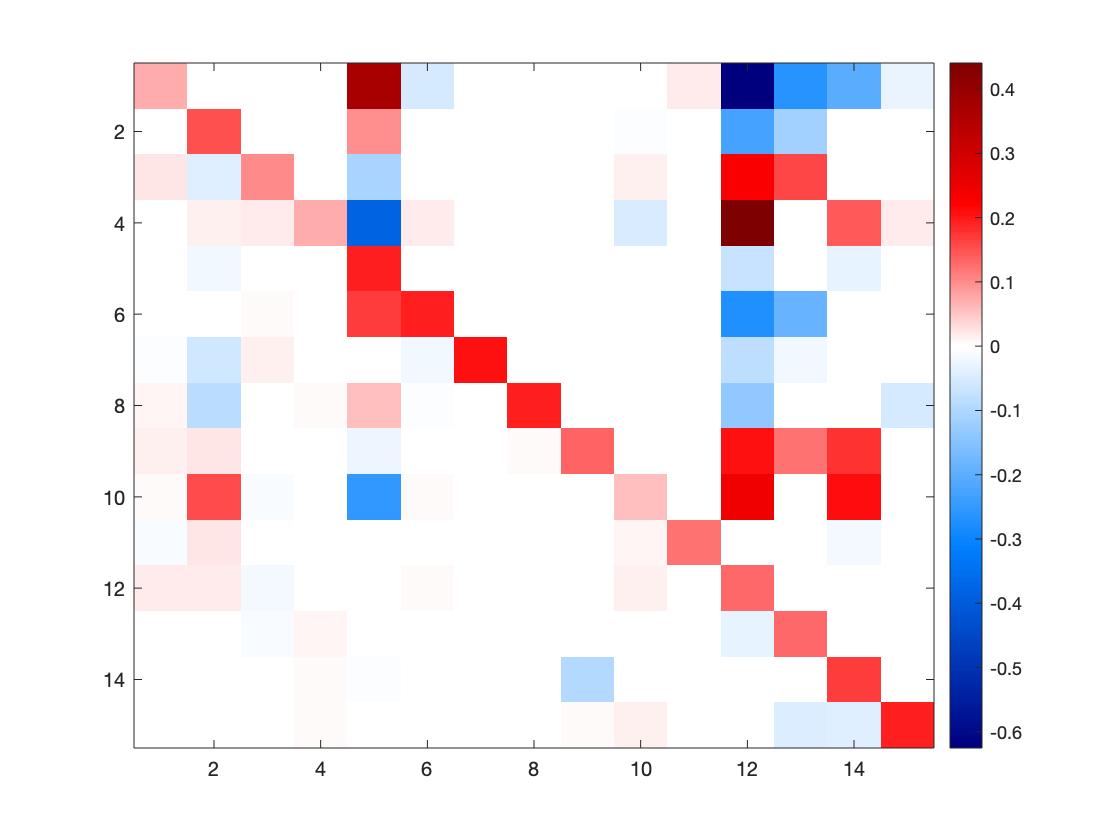}
}

\caption{Comparison of the true matrix with maximum likelihood, Lasso and Dantzig estimators.  }

 \label{Figure1}
 
\end{figure}

\begin{figure}
\centering
\subfloat[MLE - $L_1$-norm]{
  \includegraphics[width=65mm]{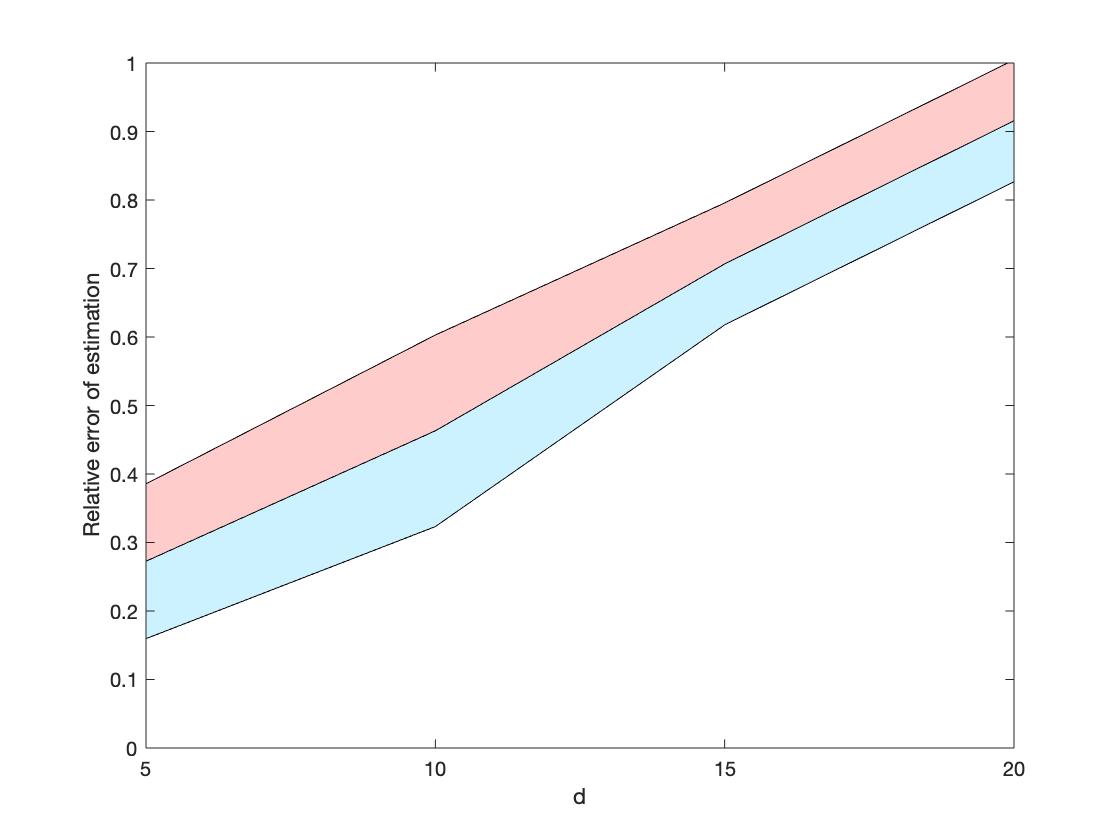}
}
\subfloat[MLE - Frobenius norm]{
  \includegraphics[width=65mm]{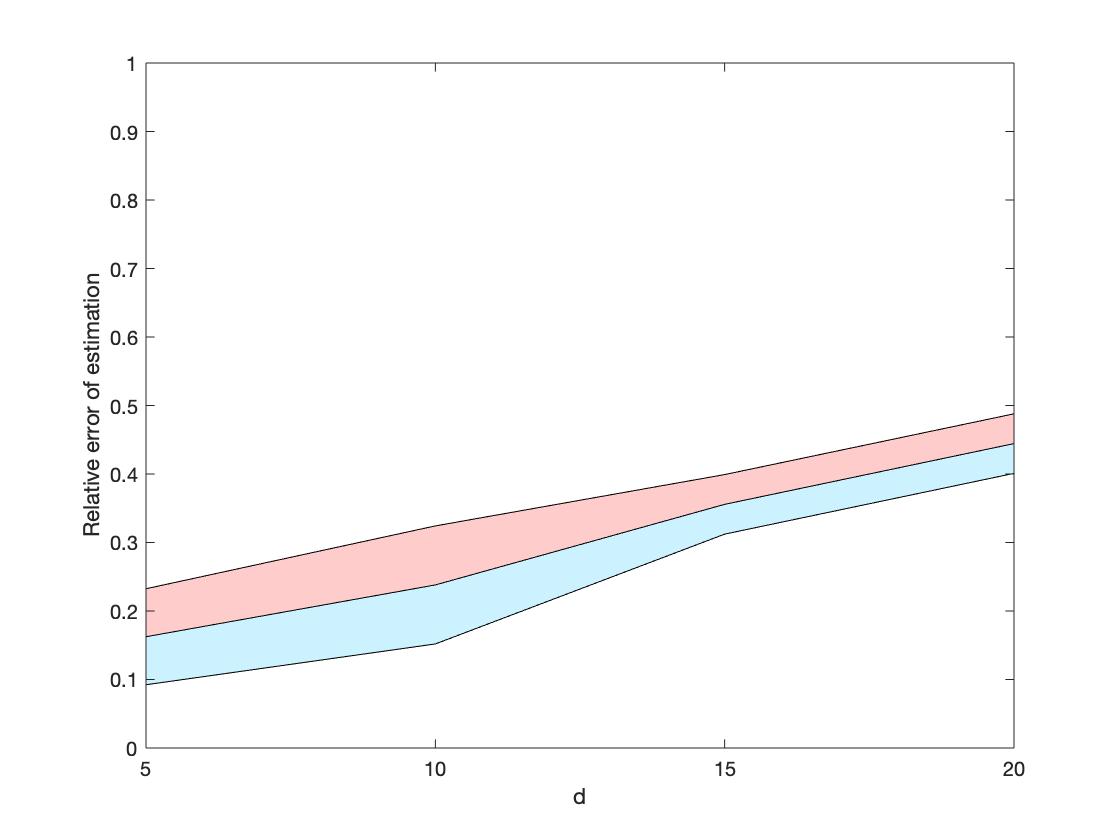}
}

\hspace{6pt}

\subfloat[Lasso - $L_1$-norm]{
  \includegraphics[width=65mm]{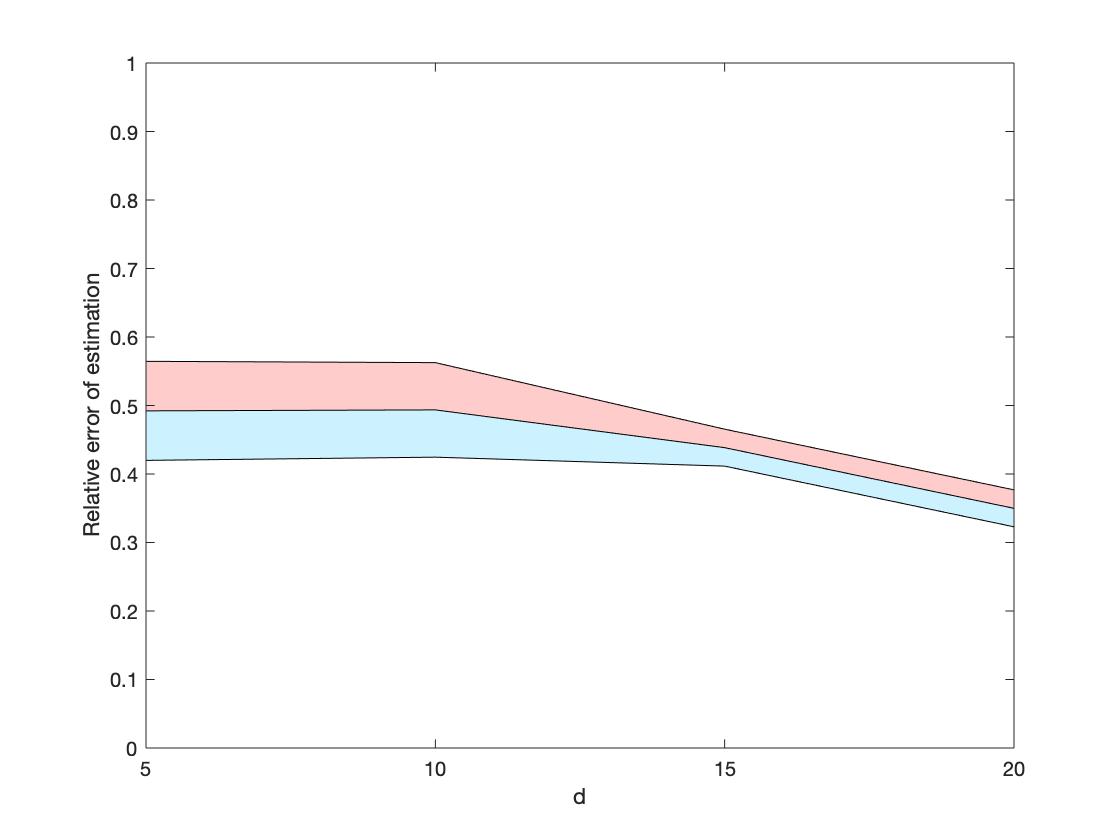}
}
\subfloat[Lasso - Frobenius norm]{
  \includegraphics[width=65mm]{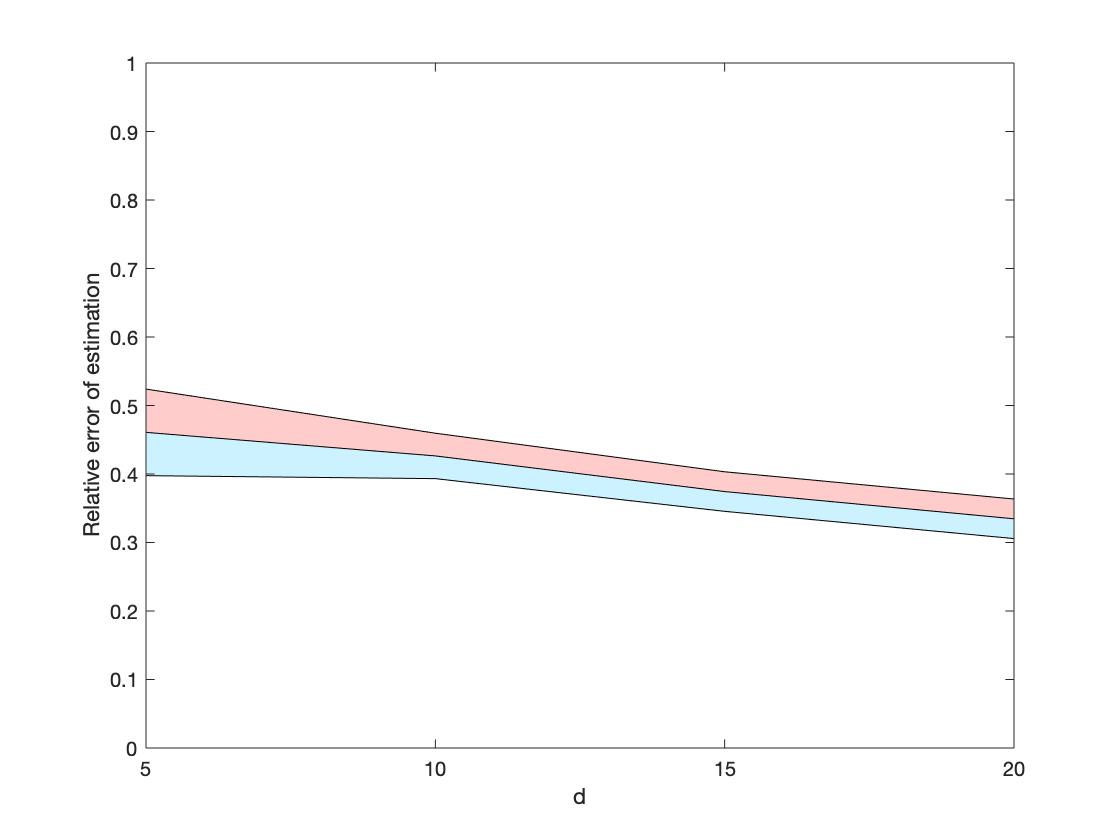}
}

\hspace{6pt}

\subfloat[Dantzig - $L_1$-norm]{
  \includegraphics[width=65mm]{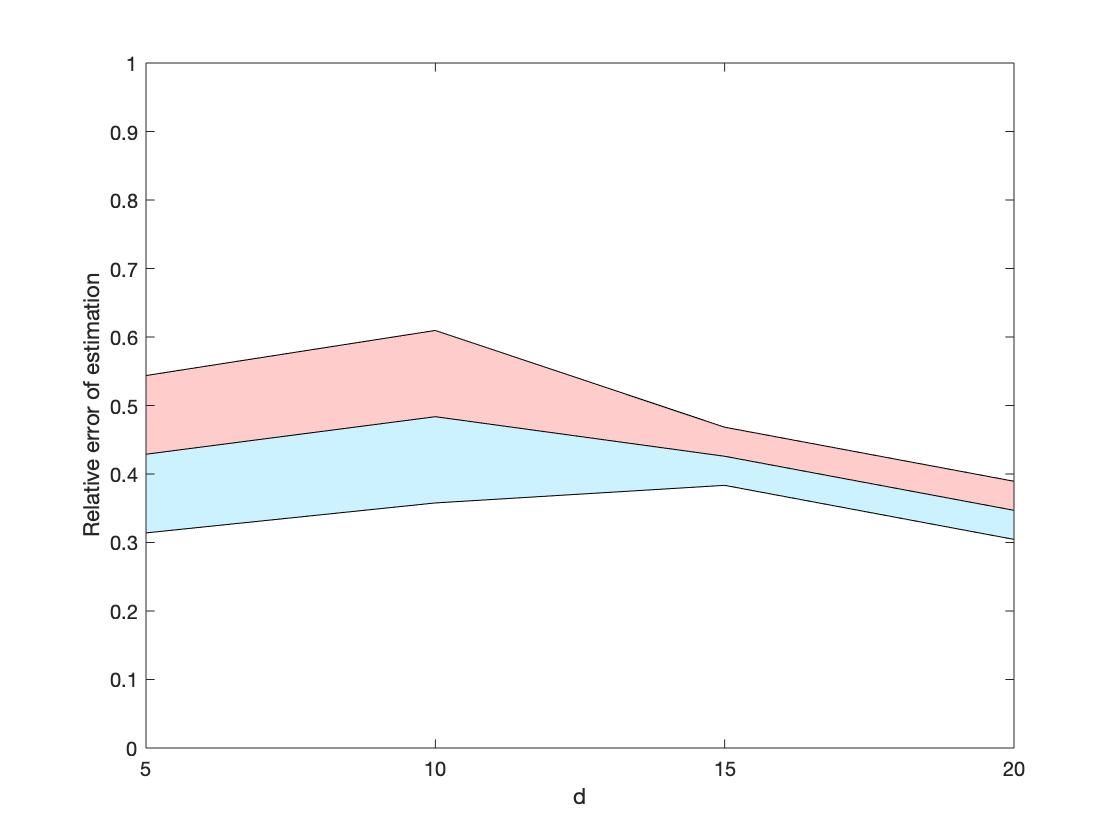}
}
\subfloat[Dantzig - Frobenius norm]{
  \includegraphics[width=65mm]{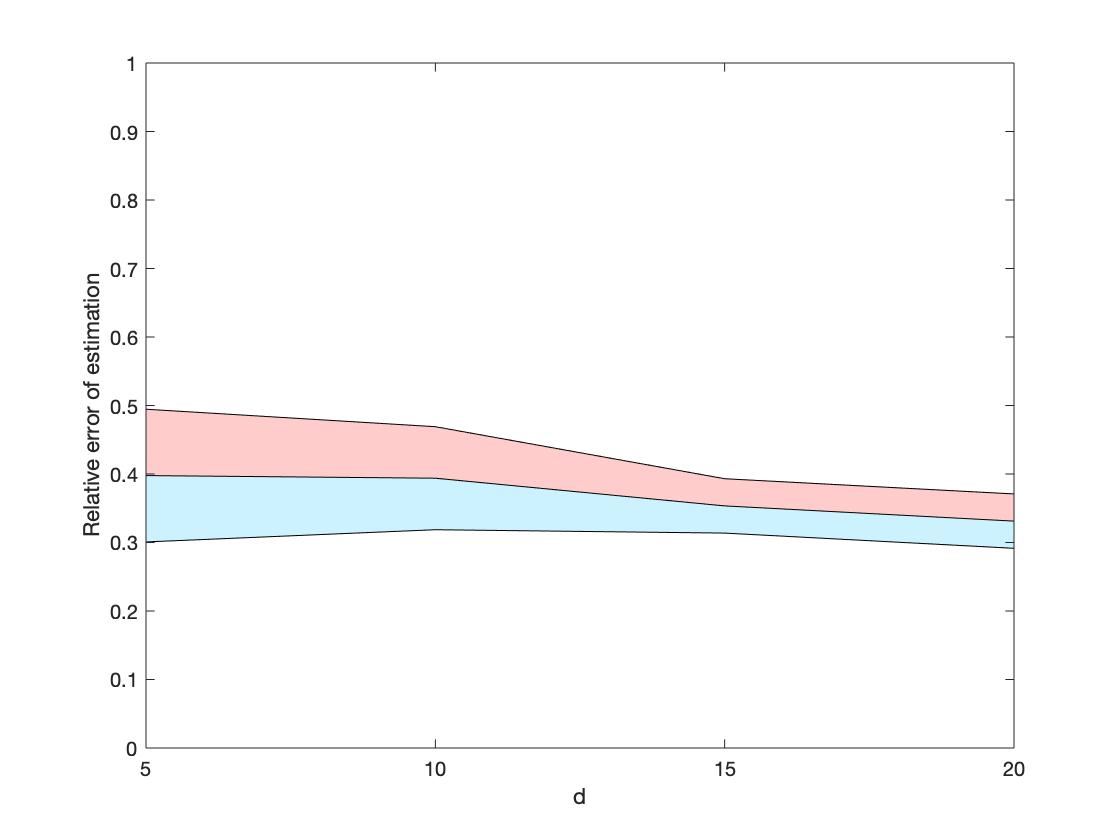}
}

\caption{Relative error of maximum likelihood, Lasso and Dantzig estimators in $L^1$  and Frobenius norms depending on $d$. Middle line corresponds to the mean and coloured areas correspond to the standard deviation of the error over 10 independent simulations. }

\label{Figure2}

\end{figure}

\section{Proofs} \label{sec6}
\setcounter{equation}{0}
\renewcommand{\theequation}{\thesection.\arabic{equation}}

\subsection{Proof of Theorem \ref{RestrEigen}} \label{sec6.2}
We first note the identity $\|VX\|_{L^2}^2=\text{tr}(V  \wC_T V^{\top})$. Replacing $\wC_T$ by its limit $C_{\infty}$ we deduce the inequality
$\text{tr}(V  C_{\infty} V^{\top}) \geq \mathfrak{k}_\infty>0$ and therefore 
\bee \label{decom}
\frac{\|VX\|_{L^2}^2}{\|V\|_{2}^2}=\frac{\text{tr}(V  C_{\infty} V^{\top})}{\|V\|_{2}^2} - \frac{\text{tr}(V  (C_{\infty}-\wC_T) V^{\top})}{\|V\|_{2}^2} \geq \mathfrak{k}_\infty - \frac{|\text{tr}(V  (C_{\infty}-\wC_T) V^{\top}) | }{\|V\|_{2}^2}. 
\eee
Next, we introduce the set $\ke(s):=\left\{ V \in \R^{d\times d} \setminus\{ 0 \} :~ \|V\|_0\leq s \right\}$. As is shown in Lemma \ref{lemSM1}  
it holds that 
\bee \label{mainineq}
\sup_{V \in \ca(s,c_0)} \frac{| \text{tr}(V (C_\infty - \wC_T) V^{\top}) | }{\|V\|_{2}^2}  \leq 3(c_0+2)^2 \sup_{V \in \mathcal{K}(2s)} 
\frac{| \text{tr}(V (C_\infty - \wC_T) V^{\top}) | }{\|V\|_{2}^2}.
\eee
Thus, it suffices to consider $\ke(s)$ instead of $\ca(s,c_0)$ in the following discussion. Observing \eqref{decom} we obtain that
\bee
\P\left(  \inf_{V \in \ke(s)} \frac{\|VX\|_{L^2}^2}{\|V\|_{2}^2} \geq \frac{\mathfrak{k}_\infty}{2} \right) 
\geq 
\P\left(  \sup_{V \in \ke(s)} \frac{| \text{tr}(V (C_\infty - \wC_T) V^{\top}) | }{\|V\|_{2}^2} \leq \frac{\mathfrak{k}_\infty}{2} \right).
\eee
For a matrix $V \in \ke(s)$ we denote its $j$-th row vector by $v^j$ and $\textbf{v}=\text{vec}(V)\in \R^{d^2}$. 
Moreover, we define a symmetric random matrix $\mathcal{D}_C=\text{id} \otimes (C_\infty-\wC_T)\in \R^{d^2 \times d^2}$. Then 
we deduce the identity
\bee \label{Vv}
\frac{\text{tr}(V (C_\infty - \wC_T) V^{\top})}{\|V\|_{2}^2}=\frac{\textbf{v}^{\top} \mathcal{D}_C \textbf{v}}{\|\textbf{v}\|_2^2}. 
\eee
According to Proposition \ref{PropH4} we obtain the following inequalities for any $x>0$:
\begin{gather*}
\P\left( \frac{|\textbf{v}^{\top} \mathcal{D}_C \textbf{v}|}{\|\textbf{v}\|_2^2} \geq x \right) \leq \P \left( \frac{\sum_{j=1}^d |v^j(C_\infty - \wC_T)(v^j)^{\top}|}{\sum_{j=1}^d \|v^j\|_2^2} \geq x \right) \\[1.5 ex]
\leq \sum_{j=1}^d \P \left( \frac{|v^j(C_\infty - \wC_T)(v^j)^{\top}|}{\|v^j\|_2^2} \geq x \right) \leq 2d \exp{(-T H_0(x))}. 
\end{gather*}
By Lemma \ref{lemSM2} we conclude that 
\bee
\P\left( \sup_{\textbf{v}\in\R^{d^2}\setminus\{0\}:~ \|\textbf{v}\|_0\leq s}\frac{|\textbf{v}^{\top} \mathcal{D}_C \textbf{v}|}{\|\textbf{v}\|_2^2} \geq 3x \right) \leq 2d \Big(\frac{21ed^2}{s}\Big)^s   \exp(-T H_0(x)) .
\eee
We deduce from  \eqref{Vv} that 
\bee
\P\left(  \inf_{V \in \ke(s)} \frac{\|VX\|_{L^2}^2}{\|V\|_{2}^2}\geq 3x \right) 
\geq 
1- 2d \Big(\frac{21ed^2}{s}\Big)^s   \exp{(-T H_0(x))}.
\eee
The latter statement together with \eqref{mainineq} implies the inequality 
\bee	
\P\Big(  \inf_{V \in \ca(s,c_0)} \frac{\|VX\|_{L^2}^2}{\|V\|_2^2}\geq \frac{\mathfrak{k}_\infty}{2} \Big)\geq 1-\epsilon_0,  
\eee
for all $T\geq  T_0(\epsilon_0,s,c_0)$, which completes the proof of Theorem \ref{RestrEigen}.

\subsection{Proof of Corollary \ref{cor1}} \label{sec6.3}
Let $e_{(i,j)}\in \R^{d\times d}$ be a matrix defined as $e_{(i,j)}^{kl}:=1_{(k,l)=(i,j)}$. We observe that
\begin{gather*}
  \left\{ \|\text{diag} (C_\infty -\wC_T)\|_\infty>\frac{\mathfrak{k}_\infty}{2} \right\} = \left\{\max_{1\leq j\leq d} \left|\text{tr}(e_{(j,j)} (C_\infty -\wC_T)
  e_{(j,j)}^{\top}\right| > \frac{\mathfrak{k}_\infty}{2} \right\}\\[1.0 ex]
  \subset \left\{  \sup_{V \in \mathcal{C}(s,c_0)} \frac{\left|\text{tr}(V (C_\infty -\wC_T)
  V^{\top}\right| }{\|V\|_{2}^2} > \frac{\mathfrak{k}_\infty}{2} \right\}.
\end{gather*}
Furthermore,  
\begin{gather*}
|C_\infty^{ij}-\wC_T^{ij}| = \left| \text{tr}(e_{(1,i)} (C_\infty - \wC_T) e_{(1,j)}^{\top}) \right|\\[1.0 ex] 
\leq \frac{1}{2} \left|\text{tr}((e_{(1,i)}
+e_{(1,j)} ) (C_\infty - \wC_T) (e_{(1,i)}+e_{(1,j)} )^{\top}) \right| \\[1.0 ex] 
+\frac{1}{2} \left| \text{tr}(e_{(1,i)} (C_\infty - \wC_T) e_{(1,i)}^{\top}) \right| +\frac{1}{2}\left| \text{tr}(e_{(1,j)} (C_\infty - \wC_T) e_{(1,j)}^{\top}) \right| \\[1.0 ex] 
\leq 3 \sup_{V \in \mathcal{C}(s,c_0)}\frac{\left|\text{tr}(V (C_\infty -\wC_T)V^{\top}\right| }{\|V\|_{2}^2}
\end{gather*}
and hence
\begin{gather*}
  \left\{ \|C_\infty -\wC_T\|_\infty> \frac{3 \mathfrak{k}_\infty}{2} \right\} = \left\{\max_{1\leq i,j\leq d}  \left| \text{tr}(e_{(1,i)} (C_\infty - \wC_T) e_{(1,j)}^{\top}) \right| > \frac{3 \mathfrak{k}_\infty}{2} \right\}\\[1.0 ex]
  \subset \left\{  \sup_{V \in \mathcal{C}(s,c_0)}\frac{\left|\text{tr}(V (C_\infty -\wC_T)V^{\top}\right| }{\|V\|_{2}^2} 
  > \frac{\mathfrak{k}_\infty}{2} \right\}.
\end{gather*}
This completes the proof of Corollary  \ref{cor1}.

\subsection{Proof of Proposition \ref{propDantzig}} \label{sec6.4}
Since $A$ satisfies the Dantzig constraint \eqref{Lassoconstr}, we deduce by definition of the Dantzig estimator:
\begin{gather*}
\|A\|_1 \geq \|\wA_{\text{D}}\|_1=\|A-\delta_D(A)_{|\mathcal{A}}\|_1+\|\delta_D(A)_{|\mathcal{A}^c}\|_1\\
\geq \|A\|_1 - \|\delta_D(A)_{|\mathcal{A}}\|_1 + \|\delta_D(A)_{|\mathcal{A}^c}\|_1,
\end{gather*}
which proves part (i). 

Now we show part (ii) of the proposition. Set $\de:=\wA_{\text{L}}-\wA_{\text{D}}$. Due to \eqref{LTformula} we deduce 
\bee \label{equivfirstineq}
\begin{gathered}
\|(\wA_{\text{L}}-A_0)X\|_{L^2}^2 -\|(\wA_{\text{D}}-A_0)X\|_{L^2}^2\\
= 2 \text{tr}\left(\left( \wA_{\text{D}} \wC_T +\ep_T-A_0\wC_T\right) \delta^{\top}\right) -2 \text{tr}\left(\ep_T \delta^{\top}\right)+ 
\text{tr}\left(\de \wC_T \delta^{\top}\right) \\
= 2 \text{tr}\left(\left( \wA_{\text{L}} \wC_T +\ep_T-A_0\wC_T\right) \delta^{\top}\right) -2 \text{tr}\left(\ep_T \delta^{\top}\right)-
\text{tr}\left(\de \wC_T \delta^{\top}\right).
\end{gathered}
\eee
The Dantzig constraint \eqref{Lassoconstr} implies the inequality
\bee
\left |\text{tr}\left(\left( \wA_{\text{D}} \wC_T +\ep_T-A_0\wC_T\right) \delta^{\top}\right)  \right| \leq \| \wA_{\text{D}} \wC_T +\ep_T-A_0\wC_T \|_\infty \|\delta\|_1\leq \lambda \|\delta\|_1,
\eee
and the same inequality holds for $\wA_{\text{D}}$ being replaced by $\wA_{\text{L}}$.
On $\mathcal{E}(s,1)$ we have
\bee
\left| \text{tr}\left(\ep_T \delta^{\top}\right) \right| \leq \frac{\lambda}{2} \|\delta\|_1. 
\eee
Furthermore, on $\big\{ \|\wA_{\text{L}}\|_0 \leq s \big\}$ it holds that $\delta \in \mathcal{C}(s,1)$ and we conclude from Theorem~\eqref{RestrEigen} that
\[
 \text{tr}\left(\de \wC_T \delta^{\top}\right)  \geq \frac{\mathfrak{k}_\infty}{2}\|\delta\|_2^2.
\]
We also have $ \|\delta\|_1 \leq 2  \|\delta_{|\text{supp}(\wA_{\text{L}})}\|_1 \leq 2 \|\wA_{\text{L}}\|_0^{1/2} \|\delta\|_2^2$. Observing the first 
identity of \eqref{equivfirstineq},  putting the previous estimates together and using the inequality $2xy\leq ax^2 + y^2/2$ for $a>0$, we obtain the following inequality 
\bee
 \|(\wA_{\text{D}}-A_0)X\|_{L^2}^2 - \|(\wA_{\text{L}}-A_0)X\|_{L^2}^2  \leq  \frac{18 }{\mathfrak{k}_\infty} \|\wA_{\text{L}}\|_0\lambda^2.
\eee
On the other hand, applying the second identity of \eqref{equivfirstineq}, we deduce that 
\bee
 \|(\wA_{\text{L}}-A_0)X\|_{L^2}^2 - \|(\wA_{\text{D}}-A_0)X\|_{L^2}^2  \leq  \frac{18 }{\mathfrak{k}_\infty} \|\wA_{\text{L}}\|_0\lambda^2,
\eee
which completes the proof.

\subsection{Some lemmas} \label{sec6.1}
In this subsection we present two results that can be easily deduced from Lemmas F.1, F.2 and F.3 from supplementary material of \cite{BM15}. We state their proofs for the sake of completeness. 

\begin{lem} \label{lemSM1}
It holds that 
\bee 
\sup_{V \in \ca(s,c_0)} \frac{| \text{\rm tr}(V (C_\infty - \wC_T) V^{\top}) | }{\|V\|_{2}^2}  \leq 3(c_0+2)^2 \sup_{V \in \mathcal{K}(2s)} 
\frac{| \text{\rm tr}(V (C_\infty - \wC_T) V^{\top}) | }{\|V\|_{2}^2}.
\eee
\end{lem}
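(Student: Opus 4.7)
The plan is to exploit the symmetric bilinear structure associated with the quadratic form
\[
Q(V) := \text{tr}\bigl(V(C_\infty - \wC_T)V^{\top}\bigr),
\]
noting that both $C_\infty$ and $\wC_T$ are symmetric. Hence $Q$ is the restriction to the diagonal of the symmetric bilinear form $B(U,V):= \text{tr}\bigl(U(C_\infty-\wC_T)V^{\top}\bigr)$, and one has the polarisation identity $4B(U,V)=Q(U+V)-Q(U-V)$. Write $\rho_{2s}$ for the supremum appearing on the right-hand side of the inequality to be proved.

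The first step is a sparse-to-sparse bilinear estimate: for any $U,V \in \R^{d\times d}$ with $\|U\|_0, \|V\|_0\leq s$ one has $|B(U,V)| \leq \rho_{2s} \|U\|_2 \|V\|_2$. Indeed, both $U\pm V$ lie in $\ke(2s)$, so the polarisation identity together with the definition of $\rho_{2s}$ yields $|B(U,V)|\leq \tfrac{\rho_{2s}}{2}(\|U\|_2^2+\|V\|_2^2)$. Applying this to the rescaled pair $(\al U, \al^{-1}V)$ with $\al>0$ and minimising over $\al$ produces the claimed multiplicative bound.

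Next, I would perform the standard block decomposition of elements of $\ca(s,c_0)$. Fix $V\in\ca(s,c_0)$, set $T_0 := \ia_s(V)$, and partition the complementary index set $T_0^c$ into consecutive blocks $T_1, T_2, \ldots$ of size at most $s$ in decreasing order of the entry magnitudes of $V$. The classical monotonicity estimate $\|V_{T_{k+1}}\|_2 \leq \|V_{T_k}\|_1/\sqrt{s}$, together with the cone inclusion $\|V_{T_0^c}\|_1 \leq c_0 \|V_{T_0}\|_1$ and Cauchy--Schwarz, gives
\[
\sum_{k\geq 2} \|V_{T_k}\|_2 \leq \|V_{T_0^c}\|_1/\sqrt{s} \leq c_0 \|V_{T_0}\|_1/\sqrt{s}\leq c_0 \|V\|_2.
\]

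The conclusion follows by assembling these estimates. Set $V_{01}:= V_{T_0\cup T_1}$, so that $\|V_{01}\|_0 \leq 2s$, and $V_R:= V-V_{01}=\sum_{k\geq 2} V_{T_k}$. Bilinearity yields $|Q(V)|\leq |Q(V_{01})|+2|B(V_{01},V_R)|+|Q(V_R)|$. The first term is at most $\rho_{2s}\|V\|_2^2$ since $V_{01}\in\ke(2s)$. Expanding $Q(V_R)=\sum_{k,l\geq 2} B(V_{T_k}, V_{T_l})$ and applying the first step to each pair bounds the third term by $\rho_{2s}\bigl(\sum_{k\geq 2}\|V_{T_k}\|_2\bigr)^2\leq c_0^2 \rho_{2s}\|V\|_2^2$. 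For the cross term I would split $V_{01}=V_{T_0}+V_{T_1}$ into two $s$-sparse pieces and invoke the first step once more on each $B(V_{T_i}, V_{T_k})$, obtaining $|B(V_{01},V_R)|\leq 2c_0\rho_{2s}\|V\|_2^2$ via $\|V_{T_i}\|_2 \leq \|V\|_2$. Summing these three contributions gives $|Q(V)|\leq (1+4c_0+c_0^2)\rho_{2s}\|V\|_2^2 \leq 3(c_0+2)^2\rho_{2s}\|V\|_2^2$, as required.

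The main delicacy lies in the first step: since $C_\infty-\wC_T$ is sign-indefinite, one cannot directly invoke a Cauchy--Schwarz-type inequality on $B$. The rescaling-plus-polarisation device is what converts the additive $\tfrac12(\|U\|_2^2+\|V\|_2^2)$ estimate into the multiplicative $\|U\|_2\|V\|_2$ estimate; this is precisely what makes the infinite-tail sum over $k\geq 2$ close up against the cone constraint, and the remainder of the argument is routine book-keeping.
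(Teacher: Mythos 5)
Your argument is correct, but it follows a genuinely different route from the paper's. The paper proves the lemma by combining two black-box results from the supplement of Basu and Michailidis: first the cone-to-convex-hull inclusion $\mathcal{C}(s,c_0)\cap\mathbb{B}_2(1)\subseteq (c_0+2)\,\mathrm{cl}(\mathrm{conv}(\mathcal{K}(s)))$, and then the bound $\sup_{\mathrm{cl}(\mathrm{conv}(\mathcal{K}(s)))}|Q|\leq 3\sup_{\mathcal{K}(2s)}|Q|$; the factor $3(c_0+2)^2$ is exactly the product of the constant $3$ and the square of the dilation factor $(c_0+2)$. You instead run the classical Bickel--Ritov--Tsybakov shelling of the cone into $s$-sparse blocks, converting the sup over $\mathcal{K}(2s)$ into a multiplicative bilinear estimate via polarisation and rescaling (which is indeed the one non-routine step, since $C_\infty-\wC_T$ is indefinite and Cauchy--Schwarz is unavailable), and then sum the diagonal, cross and tail contributions. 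Your bookkeeping is right: the symmetry of $C_\infty-\wC_T$ makes $B$ a symmetric bilinear form, each block sum $U\pm V$ is $2s$-sparse, and the cone condition closes the tail sum, yielding the constant $1+4c_0+c_0^2=(c_0+2)^2-3$. This is actually \emph{sharper} than the paper's $3(c_0+2)^2$ by more than a factor of $3$, so your inequality trivially implies the stated one (and would propagate to a slightly better $\mathfrak{T}_0$ in Theorem \ref{RestrEigen}). What the paper's approach buys is modularity and brevity; what yours buys is a self-contained elementary proof and a better constant. The only cosmetic caveat is that $U\pm V$ must be nonzero to lie in $\mathcal{K}(2s)$ as literally defined, but the polarisation bound holds trivially when it vanishes, so nothing is lost.
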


\begin{proof}
First, recall the definition of the set $ \ca(s,c_0)$  in \eqref{Csc0} and  denote the unit balls by $\mathbb{B}_q(r):= \{ v \in \mathbb{R}^d: \|v\|_q \leq r\}$ for any $d \geq 1$ and $q \geq 0,\; r>0$. Furthermore, we introduce the notation $\mathcal{K}(s) = \mathbb{B}_0(s) \cap \mathbb{B}_2(1)$ for $ s\geq 1$. For any set $P$ we denote its closure and convex hull by $\text{cl}(P)$ and $\text{conv}(P),$ respectively.
By a direct application of Lemma F.1 from \cite{BM15}, we obtain the following approximation of cone sets by sparse sets: for any $S \subset \{1, \ldots, d\}$ with $|S|=s$ we get
\begin{align}\label{eq:p1} \ca(s,c_0) \cap \mathbb{B}_2 (1) \subseteq \mathbb{B}_1 \left((c_0 +1)\sqrt{s}\right) \cap \mathbb{B}_{2}(1)\subseteq (c_0 +2) \text{cl}(\text{conv} (\mathcal{K}(s))).
\end{align}
Next, by the statement of Lemma F.3 in \cite{BM15} we have that
\begin{equation}\label{eq:p2}
    \sup_{V \in \text{cl}(\text{conv} (\mathcal{K}(s)))}| \text{\rm tr}(V (C_\infty - \wC_T) V^{\top}) | \leq 3 \sup_{V \in \mathcal{K}(2s)}| \text{\rm tr}(V (C_\infty - \wC_T) V^{\top}) |.
\end{equation}
Thus, \eqref{eq:p1} combined with \eqref{eq:p2} yields the proof.
\end{proof}
\begin{lem} \label{lemSM2}
Let $\textbf{v}=\text{\rm vec}(V)\in \R^{d^2}$
and $\mathcal{D}_C=\text{\rm id} \otimes (C_\infty-\wC_T)\in \R^{d^2 \times d^2}$. 
Then it holds that 
\bee
\P\left( \sup_{\textbf{v}\in\R^{d^2}\setminus\{0\}:~ \|\textbf{v}\|_0\leq s}\frac{|\textbf{v}^{\top} \mathcal{D}_C \textbf{v}|}{\|\textbf{v}\|_2^2} \geq 3x \right) \leq 2d\Big(\frac{21ed^2}{s}\Big)^s   \exp(-T H_0(x)) ,
\eee
where the function $H_0$ has been introduced in Proposition \ref{PropH4}. 


\end{lem}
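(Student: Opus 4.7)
The plan is a standard $\epsilon$-net argument on the set of $s$-sparse unit vectors in $\R^{d^2}$, combined with the pointwise tail bound of Proposition \ref{PropH4}. The symmetry of $\mathcal{D}_C$ and its block structure $\mathcal{D}_C = \text{id}\otimes(C_\infty - \wC_T)$ will both be essential.

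First, I would construct a net covering the $s$-sparse unit sphere. Any admissible $\textbf{v}$ has support contained in some $s$-subset $S\subseteq\{1,\dots,d^2\}$, and there are at most $\binom{d^2}{s}\leq (ed^2/s)^s$ such subsets. For each $S$, I take a $(1/3)$-net $N_S$ of the Euclidean unit sphere of $\R^S$, of cardinality $|N_S|\leq 7^s$ by the standard volumetric bound, and set $N=\bigcup_S N_S$, so that $|N|\leq (7ed^2/s)^s\leq (21ed^2/s)^s$. Denoting $M_s=\sup\{|\textbf{v}^\top\mathcal{D}_C\textbf{v}|:\|\textbf{v}\|_0\leq s,\|\textbf{v}\|_2=1\}$ and $M_s^*=\sup_{\textbf{v}\in N}|\textbf{v}^\top\mathcal{D}_C\textbf{v}|$, I would use the identity $\textbf{v}^\top\mathcal{D}_C\textbf{v}-\textbf{v}'^\top\mathcal{D}_C\textbf{v}'=(\textbf{v}-\textbf{v}')^\top\mathcal{D}_C(\textbf{v}+\textbf{v}')$ valid by symmetry of $\mathcal{D}_C$, together with the fact that $\textbf{v}+\textbf{v}'$ and $\textbf{v}-\textbf{v}'$ both lie in $\R^S$ so that the bilinear form is controlled by $M_s$ times $\|\textbf{v}+\textbf{v}'\|_2\|\textbf{v}-\textbf{v}'\|_2\leq 2\cdot(1/3)$, to derive the contraction $M_s\leq M_s^*+\frac{2}{3}M_s$, i.e.\ $M_s\leq 3M_s^*$.

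It remains to bound $M_s^*$ via a union bound. For fixed $\textbf{v}\in N$, split into blocks $v^1,\dots,v^d\in\R^d$ so that $\textbf{v}^\top\mathcal{D}_C\textbf{v}=\sum_{j=1}^d(v^j)^\top(C_\infty-\wC_T)v^j$ and $\sum_j\|v^j\|_2^2=1$. If $|\textbf{v}^\top\mathcal{D}_C\textbf{v}|\geq x$, then by the weighted-average argument already used in the proof of Proposition \ref{MartProp} at least one index $j$ with $v^j\neq 0$ must satisfy $|(v^j)^\top(C_\infty-\wC_T)v^j|/\|v^j\|_2^2\geq x$; Proposition \ref{PropH4} applied to each unit vector $v^j/\|v^j\|_2$ and a union bound over $j\in\{1,\dots,d\}$ then give $\P(|\textbf{v}^\top\mathcal{D}_C\textbf{v}|\geq x)\leq 2d\exp(-TH_0(x))$. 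A final union bound over $\textbf{v}\in N$ combined with the contraction from step two yields $\P(M_s\geq 3x)\leq\P(M_s^*\geq x)\leq 2d(21ed^2/s)^s\exp(-TH_0(x))$, which is the claim.

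The main obstacle is the $\epsilon$-net comparison in the second step: one must verify that the restricted bilinear form $|\textbf{u}^\top\mathcal{D}_C\textbf{w}|$ for $\textbf{u},\textbf{w}$ sharing a common $s$-sparse support is controlled by $M_s$ itself rather than by $M_{2s}$. Without this observation, the naive polarization identity would inflate the sparsity level from $s$ to $2s$ and prevent the contraction from closing. The symmetry of $\mathcal{D}_C$ and the choice $\epsilon=1/3$ are precisely what make the argument close at the correct sparsity with the stated constants $3$ and $21=3\cdot 7$.
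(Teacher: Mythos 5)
Your proof is correct and follows essentially the same route as the paper's: an $\epsilon$-net over the $s$-sparse unit sphere (union over the $\binom{d^2}{s}$ supports), a contraction reducing the supremum to the maximum over the net with loss factor $3$, the blockwise application of Proposition \ref{PropH4} giving the factor $2d$, and a final union bound. The only cosmetic differences are that the paper uses a $\tfrac{1}{10}$-net of cardinality $21^s$ with a three-term splitting via $10\Delta\textbf{v}\in S_U$, whereas you use a $\tfrac13$-net of cardinality $7^s$ with the polarization identity — both land on the same constants, and your observation that the restricted bilinear form is controlled by $M_s$ rather than $M_{2s}$ is exactly the point that makes either version close.
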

\begin{proof}
Choose $U \subset \{1,\ldots, d^2\}$ with $ |U|=s$, and define $$S_U = \left\{ \textbf{v} \in \mathbb{R}^{d^2}: \|\textbf{v}\|_2 \leq 1,\; \text{supp}(\textbf{v}) \subseteq U\right\}. $$ Then $\mathcal{K}(s) = \bigcup_{|U|\leq s} S_U.$ In what follows, we choose $\mathcal{A} = \{u_1, \ldots, u_m\},$ which is a $\frac{1}{10}$-net of $S_U.$ Lemma 3.5 of \cite{Vers} guarantees that $|\mathcal{A}| \leq 21^s.$ Next, notice that for every $\textbf{v} \in S_u,$ there exists some $u_i \in \mathcal{A}$ such that $\|\Delta \textbf{v}\| \leq \frac{1}{10},$ where  $\Delta \textbf{v} = \textbf{v} - u_i.$ Then it holds
$$ \gamma:= \sup_{\textbf{v} \in S_U}|\textbf{v}^{\top}\mathcal{D}_C \textbf{v}| \leq \max_{i} |u_i^{\top} \mathcal{D}_C u_i| + 2 \sup_{\textbf{v} \in S_U} | \max_{i} u_i^{\top} \mathcal{D}_C (\Delta \textbf{v})| + \sup_{v \in S_U}| (\Delta \textbf{v})^{\top} \mathcal{D}_C (\Delta \textbf{v})|.$$
Next, we use the fact that $10 (\Delta \textbf{v}) \in S_U$ which gives us in consequence
$$ \sup_{\textbf{v} \in S_U} | (\Delta \textbf{v})^{\top} \mathcal{D}_C (\Delta \textbf{v})| \leq \frac{1}{100}\gamma$$
and 
\begin{align*}
  &2 \sup_{\textbf{v} \in S_U} | \max_{i} u_i^{\top} \mathcal{D}_C (\Delta \textbf{v})|   \\ &\leq \frac{1}{10} \sup_{\textbf{v} \in S_U} |(u_i + 10 \Delta \textbf{v})^{\top}\mathcal{D}_C (u_i + 10 \Delta \textbf{v})| + \frac{1}{10}\sup_{\textbf{v} \in S_U} |u_i \mathcal{D}_C u_i| + \frac{1}{10}\sup_{v \in S_U}|(10 \Delta \textbf{v})^{\top} \mathcal{D}_C (10 \Delta \textbf{v})| \\ & \leq \frac{4}{10} \gamma + \frac{1}{10}\gamma + \frac{1}{10} \gamma
\end{align*}
which implies that
$$ \gamma \leq 3 \max_i |u_i^{\top} \mathcal{D}_C u_i|.$$
Now, we take an union bound over all $u_i \in \mathcal{A}$ and combine it with inequality \eqref{ineqH} from Proposition \ref{PropH4}. Thus,

$$\mathbb{P}\left( \sup_{\textbf{v} \in S_U} |\textbf{v}^{\top} \mathcal{D}_C \textbf{v}| \geq x\right) \leq 2 d \exp (-TH_0(x) + s \log 21).$$ 
Next, we take another union bound over $ {d^2 \choose s} \leq \left( \frac{e d^2}{s}\right)^s $
choices of $U$. Thus,
\bee
\P\left( \sup_{\textbf{v}\in\R^{d^2}\setminus\{0\}:~ \|\textbf{v}\|_0\leq s}\frac{|\textbf{v}^{\top} \mathcal{D}_C \textbf{v}|}{\|\textbf{v}\|_2^2} \geq 3x \right) \leq 2 d \Big(\frac{21ed^2}{s}\Big)^s   \exp(-T H_0(x)) ,
\eee
which yields the proof.
\end{proof}

\bibliographystyle{chicago}

\begin{thebibliography}{99}
 
 
\bibitem{BM15} S. Basu and G. Michailidis (2015): Regularized estimation in sparse high-dimensional time series models. {\em Annals of Statistics}
43(4), 1535--1567. 

\bibitem{BRT09} P.J. Bickel, Y. Ritov and A.B. Tsybakov (2009): Simultaneous analysis of lasso and Dantzig selector. {\em Annals of Statistics} 37, 1705--1732.

\bibitem{BCC11} F. Bolley, J.A. Ca$\tilde{\text{n}}$izo, and J.A. Carrillo (2011): Stochastic mean-field limit: non-Lipschitz forces and swarming. {\em Mathematical Models and Methods in Applied Sciences}
21(11), 2179--2210.

\bibitem{BV} P. B{\"u}hlmann and S. van de Geer (2011):  {\em Statistics for high-dimensional data.} Springer Series in Statistics, Springer. 

\bibitem{Can07} E. Candes and T. Tao (2007): The Dantzig selector: Statistical estimation when $p$ is much larger than $n$. {\em Annals of Statistics}, 35(6), 2313--2351.
\bibitem{CZ16} R. Carmona and X. Zhu (2016): A probabilistic approach to mean field games with major and minor players. {\em Annals of Applied Probability} 26(3), 1535--1580. 



\bibitem{DI12} A. De Gregorio and S. Iacus (2012): Adaptive Lasso-type estimation for multivariate diffusion processes. {\em Econometric Theory}  28,
838--860.

\bibitem{dicker2014} L. Dicker, Y. Li and SD Zhao (2014): The Dantzig selector for censored linear regression models. {\em Statistica Sinica}
24(1):251--275

\bibitem{FTC09} O. Faugeras, J. Touboul and B. Cessac (2009):
A constructive mean-field analysis of multi-population neural networks with random synaptic weights and stochastic inputs. {\em Frontiers in Computational Neuroscience} 3, 1--28.   

\bibitem{F19} K. Fujimori (2019): The Dantzig selector for a linear model of diffusion processes. {\em Statistical Inference for Stochastic Processes} 22, 475--498.

\bibitem{GM19} S. Ga{\"i}ffas and G. Matulewicz (2019): Sparse inference of the drift of a high-dimensional
Ornstein-Uhlenbeck process. {\em Journal of Multivariate Analysis} 169, 1--20.

\bibitem{J08a} M. Jackson (2008): {\em Social and economic networks.} Princeton, NJ: Princeton University Press.

\bibitem{JP12} J. Jacod and P. Protter (2012): {\em Discretization of processes.} Stochastic Modelling and Applied Probability, Springer. 
\bibitem{Jam09} G. James, P. Radchenko, and J. Lv (2009): Dasso: connections between the Dantzig selector
and Lasso. {\em Journal of the Royal Statistical Society: Series B (Statistical Methodology)} 71(1), 127--142

\bibitem{KR01} M. Kessler and A. Rahbek (2001): Asymptotic likelihood based inference for
co-integrated homogenous Gaussian diffusions. {\em Scandinavian Journal of Statistics} 28, 455--470.

\bibitem{KS97} U. K{\"u}chler and M. S{\o}rensen (1997): {\em Exponential families of stochastic processes.} Springer Series in Statistics, Springer. 

\bibitem{KS99} U. K{\"u}chler and M. S{\o}rensen (1999): A note on limit theorems for multivariate martingales. {\em Bernoulli} 5(3), 483--493. 

\bibitem{K04} Y. A. Kutoyants (2004): {\em Statistical inference for ergodic diffusion processes.} Springer Series in Statistics, Springer. 

\bibitem{M66} H.P. McKean (1966): Speed of approach to equilibrium for Kac's caricature of a Maxwellian gas. {\em Archive for Rational Mechanics and Analysis} 21(5), 343--367.

\bibitem{M67} H.P. McKean (1967): Propagation of chaos for a class of non-linear parabolic equations. In {\em Stochastic Differential Equations (Lecture Series in Differential Equations, Session 7, Catholic University}, 41--57. Air Force Office of Scientific Research, Arlington.

\bibitem{NV09} I. Nourdin and F.G. Viens (2009): Density formula and concentration inequalities with Malliavin calculus. {\em Electronic Journal of Probability} 14, 2287--2309. 

\bibitem{N06} D. Nualart (2006): {\em The Malliavin calculus and related topics.} 2nd edition, Probability and Its Applications, Springer. 

\bibitem{PI14} J.B.A. Periera and M. Ibrahimi (2014): Support recovery for the drift coefficient
of high-dimensional diffusions. {\em IEEE Trnasactions of Information Theory} 60(7), 4026--4049.

\bibitem{RY05} D. Revuz and M. Yor (2005): {\em Continuous martingales and Brownian motion}. 3rd edition, A Series of Comprehensive Studies in Mathematics, Springer.

\bibitem{S91} A.-S. Sznitman (1991): {\em Topics in propagation of chaos.} 
In P.-L. Hennequin, editor, {\em  \'Ecole d'\'Et\'e de 
Probabilit\'es de Saint Flour XIX - 1989}, volume 1464 of Lecture Notes in Mathematics, 
Springer, Berlin, 165--251.


\bibitem{Vers} R. Vershynin (2009): \textit{Lectures in Geometric Functional Analysis.} available at
http://www-personal.umich.edu/ romanv/papers/GFA-book/GFA-book.pdf. 

\end{thebibliography}
 
\end{document}